\theoremstyle{plain}
\newtheorem{lemma}{Lemma}
\newtheorem{theorem}{Theorem}
\newtheorem{corollary}{Corollary}
\newtheorem{definition}{Definition}
\newtheorem{remark}{Remark}
\newcommand{\norm}[1]{\left\lVert#1\right\rVert}
\newcommand{\Homeo}{\mathrm{Homeo}}
\newcommand{\Symp}{\mathrm{Symp}}
\newcommand{\Cosymp}{\mathrm{Cosymp}}
\newcommand{\Iso}{\mathrm{Iso}}
\newcommand{\Omk}[1]{\Omega^{#1}(M)}
\newcommand{\Omkc}[1]{\Omega^{#1}_c(M)}
\newcommand{\Dp}[1]{\mathcal{D}_{#1}(M)} 
\newcommand{\Rk}[1]{\mathcal{R}_{#1}(M)} 
\newcommand{\Xfrak}{\mathfrak{X}(M)}
\DeclareMathOperator{\supp}{supp}
\DeclareMathOperator{\diam}{diam}
\DeclareMathOperator{\Id}{Id}
\DeclareMathOperator{\Vol}{Vol}
\DeclareMathOperator{\Lk}{Lk}
\DeclareMathOperator{\im}{im}
\newcommand{\abs}[1]{\left\lvert#1\right\rvert}
\newcommand{\pullback}[1]{{#1}^*}
\newcommand{\pushforward}[1]{{#1}_*}
\newcommand{\bdry}{\partial} 
\newcommand{\Diff}{\mathrm{Diff}^{\infty}}
\newcommand{\Currents}[1]{\mathcal{D}_{#1}}
\newcommand{\Forms}[1]{\Omega^{#1}}
\newcommand{\Rectifiables}[1]{\mathcal{R}_{#1}}
\begin{document}
	\begin{center}
		{\bf\large Flat Convergence of Pushforwards of Rectifiable Currents Under $C^0$-Diffeomorphism Limits}
		\\[0.5cm]
		{ St\'{e}phane Tchuiaga $^a$ \footnote{$^\ast$Corresponding Author} \\[2mm]
			$^a$Department of Mathematics, University of Buea, 
			South West Region, Cameroon\\[2mm]
			{\texttt{E-mail: tchuiaga.kameni@ubuea.cm}}} \\[2mm]
	\end{center}%
	\vspace*{0.5cm}
	\begin{quotation}
		\noindent
		{\footnotesize

			{\sc Abstract.}
			We study the action on currents and differential forms on compact Riemannian manifolds under $C^0$-limits of diffeomorphisms. Using tools from geometric analysis, measure theory, and homotopy theory, we establish several convergence theorems. We give conditions under which pullbacks of differential forms by a sequence of smooth diffeomorphisms converge uniformly (in the $C^0$ norm), and pushforwards of currents by smooth diffeomorphisms exhibit weak-* convergence. We prove that pushforwards of rectifiable currents are convergent in the flat norm, a property of particular interest in the study of singular geometric objects. These stability findings offer tools for the study of geometric structures, including applications to the stability of groups of symplectomorphisms, cosymplectomorphisms, volume-preserving transformations, and contact transformations under $C^0$ perturbations. We highlight applicability in measure theory and dynamical systems.}
		
	\end{quotation}
	\ \\

	\noindent
	{\bf Keywords:} Rectifiable Currents, $C^0$-Convergence, Flat Norm, Diffeomorphisms, Weak-* Convergence.\\
	
	\noindent
	\textbf{2020 Mathematics Subject Classification:} 53Dxx, 49Q15, 28A75, 90B06.
	\markboth 
	{St\'ephane Tchuiaga}
	{Transformation Groups and Differential Forms}

	\vspace{1em}
	
	\section{Introduction}
	The study of how geometric objects transform under mappings is fundamental in differential geometry and topology. Diffeomorphisms provide smooth transformations, and their actions on differential forms (pullback) and vector fields/currents (pushforward) are well-understood. However, considering sequences of diffeomorphisms that converge in weaker topologies, such as the $C^0$ (uniform) topology, raises questions about the continuity of these actions.
	This paper investigates the stability of the pushforward operation on currents under $C^0$-limits of smooth diffeomorphisms on a compact Riemannian manifold $M$. While it is known that $C^0$-convergence does not imply convergence of derivatives, we show that important geometric structures exhibit continuity properties even under this weaker convergence assumption.
	
	We begin by recalling preliminary results. Lemma \ref{Key-current} establishes uniform convergence for the pullback of smooth forms and weak-* convergence for the pushforward of general currents under $C^0$-limits of diffeomorphisms. Lemma \ref{Key-0-current} sharpens this for 1-currents evaluated on closed 1-forms. Our main result, Theorem \ref{TC}, proves a stronger form of convergence: the pushforward of rectifiable $k$-currents converges in the flat norm topology. This is significant because the flat norm captures geometric features like boundaries and shapes more effectively than the weak-* topology.
	
	The stability results presented here have implications for various geometric settings. In Section \ref{sec:applications}, we demonstrate how these findings imply the $C^0$-rigidity of the groups of symplectomorphisms, cosymplectomorphisms, contactomorphisms preserving the Reeb field, and isometries. These results generalize previous work (e.g., \cite{Eliashberg81, Eliashberg87, Tchuiaga22}) by providing a unified framework based on current convergence. The study connects to ergodicity and optimal transport theory, where understanding the behavior of measures and transport maps under perturbations is crucial. Our results contribute to the foundational properties underlying the convergence of measures when the underlying space is perturbed by diffeomorphisms, complementing work by researchers like Gangbo \cite{Gangbo96, Gangbo99, Gangbo98}. Our findings are the following:
	
	\begin{lemma}\label{Key-current}
		Let $M$ be a compact smooth manifold with a Riemannian metric $g$. Let $\{\phi_k\} \subseteq \Diff(M)$ be a sequence of smooth diffeomorphisms with $\phi_k \xrightarrow{C^0} \psi \in \Diff(M)$.
		Then:
		\begin{enumerate}
			\item If for a smooth $p$-form $\alpha \in \Omk{p}$ the sequence $\pullback{\phi_k} \alpha$ converges uniformly in $\Omk{p}$ to $\beta$, then  $\beta= \pullback{\psi} \alpha$.
			\item If for any smooth $p$-form $\omega \in \Omk{p}$ the sequence $\pullback{\phi_k} \omega$ converges uniformly in $\Omk{p}$, then for any $p$-current $T \in \Dp{p}$ of order $0$, the sequence of pushforwards $\{\pushforward{(\phi_k)} T\}$ converges weak-* to $\pushforward{\psi} T$. That is, $\lim_{k\to\infty} (\pushforward{(\phi_k)} T)(\omega) = (\pushforward{\psi} T)(\omega)$ for all $\omega \in \Omk{p}$.
		\end{enumerate}
	\end{lemma}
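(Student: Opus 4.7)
The lemma has two parts; since (2) follows from (1) by duality, the real work is in part (1), and I would tackle them in that order.

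\textbf{Plan for part (1).} The goal is to show that the uniform limit $\beta$ of $\pullback{\phi_k}\alpha$ must equal $\pullback{\psi}\alpha$, despite lacking derivative control on the $\phi_k$. My first step would be to reduce to the case $\psi = \Id$ by setting $\gamma_k := \psi^{-1}\circ\phi_k$ and $\tilde\alpha := \pullback{\psi}\alpha$; then $\gamma_k \to \Id$ in $C^0$ (inversion is continuous in the uniform topology on $\Homeo(M)$ for compact $M$) and $\pullback{\phi_k}\alpha = \pullback{\gamma_k}\tilde\alpha$, so the target becomes $\pullback{\gamma_k}\tilde\alpha \to \tilde\alpha$ uniformly. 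I would then identify the limit by pairing with smooth compactly supported test $(n-p)$-forms $\omega$ and using the change-of-variables identity
$$\int_M \pullback{\gamma_k}\tilde\alpha \wedge \omega = \int_M \tilde\alpha \wedge \pullback{(\gamma_k^{-1})}\omega,$$
whose left-hand side tends to $\int_M \beta \wedge \omega$ by the uniform convergence hypothesis. A symmetric duality argument, exploiting that $\gamma_k^{-1} \to \Id$ uniformly as well, should give $\int_M \tilde\alpha \wedge \pullback{(\gamma_k^{-1})}\omega \to \int_M \tilde\alpha \wedge \omega$. Equating limits and varying $\omega$ over a dense family then forces $\beta = \tilde\alpha$ by density and continuity.

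\textbf{Plan for part (2).} With (1) in hand, this is immediate. By the definition of pushforward of a current by a diffeomorphism, $(\pushforward{(\phi_k)}T)(\omega) = T(\pullback{\phi_k}\omega)$. The hypothesis of part (2) gives $\pullback{\phi_k}\omega \to \eta$ uniformly for some smooth $\eta$, and part (1) identifies $\eta = \pullback{\psi}\omega$. Since $T$ has order zero, it extends to a continuous functional on $p$-forms equipped with the $C^0$ norm, hence $T(\pullback{\phi_k}\omega) \to T(\pullback{\psi}\omega) = (\pushforward{\psi}T)(\omega)$, which is the desired weak-$*$ convergence.

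\textbf{Main obstacle.} The heart of the difficulty is in part (1): justifying $\int_M \tilde\alpha \wedge \pullback{(\gamma_k^{-1})}\omega \to \int_M \tilde\alpha \wedge \omega$ without any $C^1$ control on $\gamma_k^{-1}$. Derivatives of $\gamma_k^{-1}$ can oscillate while $\gamma_k^{-1}$ itself remains uniformly close to the identity, so the limit must be extracted by a symmetric argument that uses both directions of $C^0$-convergence together with the uniform bound on $\pullback{\gamma_k}\tilde\alpha$ inherited from the hypothesis. I would likely argue this via a local partition-of-unity reduction to forms supported in small coordinate charts, where the integrals can be estimated concretely and the derivative oscillations averaged away.
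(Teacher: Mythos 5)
Your plan for part (2) is fine and is essentially what the paper does: once part (1) identifies the uniform limit of $\pullback{\phi_k}\omega$ as $\pullback{\psi}\omega$, the order-zero bound $|T(\eta)|\le C_T\norm{\eta}_0$ gives the weak-* convergence immediately.

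The gap is in part (1), and it is exactly the step you yourself flag as the ``main obstacle'': proving $\int_M \tilde\alpha\wedge\pullback{(\gamma_k^{-1})}\omega \to \int_M\tilde\alpha\wedge\omega$ with only $C^0$ control. This is not a residual technicality; it is the entire difficulty, and your reduction is circular: the same change of variables gives $\int_M\tilde\alpha\wedge\pullback{(\gamma_k^{-1})}\omega=\pm\int_M\pullback{\gamma_k}\tilde\alpha\wedge\omega$, so the identity only shuttles between the two sides, and the side you must now control is the weak-* convergence of $\pullback{(\gamma_k^{-1})}\omega$ for an \emph{arbitrary} smooth test form $\omega$ --- a statement for which no hypothesis is available (the assumed uniform convergence concerns $\pullback{\gamma_k}\tilde\alpha$ only, not pullbacks of $(n-p)$-forms by the inverses). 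If such unconditional weak continuity of pullbacks under $C^0$-limits of diffeomorphisms were true, the lemma would need no hypothesis at all and, for instance, Eliashberg's $C^0$-rigidity of symplectomorphisms would follow in one line; avoiding precisely this claim is the reason the hypothesis is there (see the paper's remark after the lemma). Moreover, the technique you propose --- localize with a partition of unity and integrate by parts to ``average away'' derivative oscillations --- can only eliminate the terms that are \emph{linear} in the derivatives of $\gamma_k^{-1}$. For $1\le p\le n-1$ the pullback involves $p\times p$ minors of the Jacobian, i.e.\ products of at least two uncontrolled derivative factors; after one integration by parts you are left with terms of the type $\int f\,(\partial_1 u\,\partial_2 v-\partial_2 u\,\partial_1 v)$ with $u,v$ merely $C^0$-small, and such terms are not small without Sobolev-type bounds (null-Lagrangian/compensated-compactness arguments need $W^{1,p}$ control that $C^0$ convergence does not supply). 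Only the extreme degrees $p=0$ and $p=n$ succumb to the change-of-variables argument, which is what makes the sketch look plausible. Two smaller points: the wedge pairing and the identity $\int_M\pullback{\gamma_k}\mu=\int_M\mu$ require $M$ oriented and $\gamma_k$ orientation-preserving (the lemma does not assume orientability, so you would need densities or the orientation double cover, and an argument that $\gamma_k$ preserves orientation for large $k$). For comparison, the paper identifies the limit by testing $\beta$ and $\pullback{\psi}\alpha$ against rectifiable currents (smooth simplices) rather than through the wedge-product duality you use.
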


	\begin{remark}
		Without the convergence condition on $\pullback{\phi_k} \alpha$, the validity of Part 1 under only $C^0$ convergence requires careful justification. The standard definition of pullback $\pullback{\phi}\alpha$ involves the derivatives of $\phi$, which are not controlled by $C^0$ convergence. For example, the sequence of smooth maps $\phi_k(x) = x + \frac{1}{k}\sin(k^2 x)$ on $S^1$ converges to $\Id$ in $C^0$, but for $k \ge 1$, $\phi_k$ fails to be a diffeomorphism as its derivative is zero at some points. Moreover, $\pullback{\phi_k}(dx) = (1 + k\cos(k^2 x))dx$ does not converge uniformly to $dx$. This also highlights that the choice of $\Diff(M)$ instead of just $C^l(M)$ for some fixed $l$ is crucial.
	\end{remark}
	
	\begin{lemma}\label{Key-0-current}\textbf{(Key Lemma)}
		Let $(M, g)$ be a connected, compact, and orientable Riemannian manifold, $\mathcal{D}_1(M)$ the space of all $1$-currents on it, and
		$\{\psi_i\}\subseteq \Diff(M)$ be a sequence of diffeomorphisms with $\psi_i \xrightarrow{C^0} \psi\in \Diff(M)$.
		If for a closed 1-form $\eta \in Z^1(M)$, the sequence of pullbacks $\pullback{\psi_i} \eta$ converges uniformly to a 1-form $\theta$, then $\theta = \pullback{\psi} \eta$.
	\end{lemma}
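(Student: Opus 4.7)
The plan is to reduce the equality of $1$-forms $\theta = \pullback{\psi}\eta$ to an equality of line integrals along every smooth test curve. Both $\theta$ (a uniform limit of smooth forms) and $\pullback{\psi}\eta$ (the pullback under a smooth diffeomorphism) are continuous $1$-forms on $M$, and a continuous $1$-form $\omega$ is determined pointwise by $\omega_x(v) = \lim_{s \downarrow 0} s^{-1} \int_\gamma \omega$ for any smooth curve $\gamma$ with $\gamma(0)=x$ and $\dot\gamma(0)=v$. So it suffices to prove $\int_\gamma \theta = \int_\gamma \pullback{\psi}\eta$ for every smooth curve $\gamma:[0,1]\to M$, and then specialize to curves of shrinking length.

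Fix such a $\gamma$ and set $\alpha \ldef \psi\circ\gamma$ and $\alpha_i \ldef \psi_i \circ \gamma$. Uniform convergence of $\pullback{\psi_i}\eta$ on the compact image of $\gamma$ gives
\[
\int_\gamma \theta \;=\; \lim_{i\to\infty} \int_\gamma \pullback{\psi_i}\eta,
\]
and smoothness of each $\psi_i$ and of $\psi$ allows the classical change of variables $\int_\gamma \pullback{\psi_i}\eta = \int_{\alpha_i}\eta$ and $\int_\gamma \pullback{\psi}\eta = \int_\alpha \eta$. The lemma is thus reduced to the single claim that $\int_{\alpha_i}\eta \to \int_\alpha \eta$ as $i\to\infty$.

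This is where closedness of $\eta$ enters. The plan is to cover a compact $C^0$-neighborhood of $\alpha([0,1])$ by finitely many geodesically convex, simply connected open sets $U_1,\ldots,U_N$ on which $\eta = df_j$; such a cover exists by compactness of $M$ and positivity of its injectivity radius. A Lebesgue-number argument then yields a partition $0 = t_0 < \cdots < t_m = 1$ and indices $\sigma(j) \in \{1,\ldots,N\}$ such that, for all sufficiently large $i$, both $\alpha([t_{j-1},t_j])$ and $\alpha_i([t_{j-1},t_j])$ lie in $U_{\sigma(j)}$. Since $\eta = df_{\sigma(j)}$ there, each piece of the integral telescopes to a difference of primitives at endpoints: $\int_{\alpha_i|_{[t_{j-1},t_j]}} \eta = f_{\sigma(j)}(\alpha_i(t_j)) - f_{\sigma(j)}(\alpha_i(t_{j-1}))$, and similarly for $\alpha$. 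Summing in $j$ and passing to the limit using $\alpha_i(t_j) \to \alpha(t_j)$ (from $\psi_i \to \psi$ in $C^0$) together with continuity of each $f_{\sigma(j)}$ yields $\int_{\alpha_i}\eta \to \int_\alpha \eta$.

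The main obstacle is the uniformity in $i$ at this last step: one partition and one assignment $\sigma$ must be chosen that work for all sufficiently large $i$ at once, and this is precisely where uniform (rather than merely pointwise) $C^0$-convergence of $\psi_i$ to $\psi$ is used, combined with the Lebesgue number of the open cover. An equivalent homotopy-theoretic route replaces the cover argument with a Stokes computation on a thin geodesic ribbon $H_i:[0,1]^2 \to M$ joining $\alpha$ to $\alpha_i$ via minimizing geodesics; closedness $d\eta = 0$ then collapses the difference $\int_{\alpha_i}\eta - \int_\alpha \eta$ to boundary integrals along two short geodesic segments of length at most $d_{C^0}(\psi_i,\psi)$, which vanish in the limit since $\eta$ is bounded on the compact manifold $M$.
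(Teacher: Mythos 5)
Your argument is correct, and its main route differs from the paper's. Where you reduce $\int_{\alpha_i}\eta \to \int_{\alpha}\eta$ to local exactness of the closed form $\eta$ on a finite cover by geodesically convex sets, plus a Lebesgue-number partition so that corresponding segments of $\alpha=\psi\circ\gamma$ and $\alpha_i=\psi_i\circ\gamma$ telescope against the same local primitives $f_{\sigma(j)}$, the paper instead builds a geodesic homotopy $2$-chain $H_i$ between $\psi_i\circ\gamma$ and $\psi\circ\gamma$ and applies Stokes' theorem: closedness of $\eta$ kills the interior term, leaving only the integrals over the two short geodesic caps of length at most $d_{C^0}(\psi_i,\psi)$, which vanish since $\eta$ is bounded on the compact $M$ --- this is exactly the "thin geodesic ribbon" alternative you sketch at the end, so you have in effect both proofs. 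Your primitive-telescoping version is more elementary (no $2$-chain, no Stokes, only pointwise convergence $\alpha_i(t_j)\to\alpha(t_j)$ and continuity of the $f_{\sigma(j)}$), at the cost of the cover/Lebesgue-number bookkeeping needed to fix one partition valid for all large $i$; the paper's ribbon argument is shorter once the well-definedness of the geodesic homotopy below the injectivity radius is granted. The final identification step also differs slightly: the paper integrates along flow lines of arbitrary vector fields and invokes the Lebesgue differentiation theorem, whereas you test against arbitrary smooth curves and recover $\omega_x(v)$ by differentiating $s^{-1}\int_{\gamma|_{[0,s]}}\omega$ as $s\downarrow 0$, which suffices here (and is cleaner) because both $\theta$, as a uniform limit of smooth forms, and $\pullback{\psi}\eta$ are continuous.
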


	\begin{theorem}\label{TC}\textbf{(Main Result)}
		Let $(M,g)$ be a connected, compact, oriented Riemannian manifold, $\mathcal{R}_k(M)$ be the space of rectifiable $k$-currents on $M$, and $\{\psi_i\} \subset \Diff(M)$ be a sequence such that $\psi_i \xrightarrow{C^0} \psi \in \Diff(M)$.  If for any closed $1$-form $\beta$ the sequence $\pullback{\psi_i} \beta$ converges uniformly in $\mathcal Z^1(M)$, then for every rectifiable $k$-current $T \in \mathcal{R}_k(M)$, the sequence of pushforwards $\{\psi_{i\ast}T\}$ converges in the flat norm to $\psi_{\ast}T$. Precisely, $\lim_{i\to \infty} \| \psi_{i\ast}T - \psi_{\ast}T \|_F=0$, where $\| \cdot \|_F$ is the flat norm on the space of currents.
	\end{theorem}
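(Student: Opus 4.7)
The plan is to upgrade the closed-form hypothesis to $C^1$-convergence of $\psi_i \to \psi$ and then apply a current-theoretic homotopy formula, handling the boundary term by an approximation argument.

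\textbf{Upgrading to $C^1$-convergence and reduction.} For each $f \in C^\infty(M)$, the form $df$ is exact and hence closed, so by the hypothesis and Lemma~\ref{Key-0-current}, $d(f\circ\psi_i) = \psi_i^*(df)$ converges uniformly to $d(f\circ\psi) = \psi^*(df)$. Combined with $f\circ\psi_i \to f\circ\psi$ uniformly (from $\psi_i \xrightarrow{C^0}\psi$), this yields $f\circ\psi_i \to f\circ\psi$ in $C^1$ for every $f \in C^\infty(M)$. Applying this to the components of a smooth embedding $\iota \colon M \hookrightarrow \mathbb{R}^N$ upgrades $C^0$-convergence to $\psi_i \to \psi$ in $C^1(M,M)$; in particular $\mathrm{Lip}(\psi_i)$ and $\mathrm{Lip}(\psi_i^{-1})$ stay uniformly bounded. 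Setting $\phi_i \ldef \psi^{-1}\circ\psi_i \to \Id$ in $C^1$, the factorization $\psi_{i*}T - \psi_*T = \psi_*(\phi_{i*}T - T)$ together with the boundedness of $\psi_*$ on the flat norm reduces matters to proving $\|\phi_{i*}T - T\|_F \to 0$.

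\textbf{Homotopy formula and mass estimate.} For $i$ large, $V_i(x) \ldef \exp_x^{-1}(\phi_i(x))$ is a smooth vector field with $V_i \to 0$ in $C^1$. The map $H_i(s,x) \ldef \exp_x(sV_i(x))$ is a smooth homotopy from $\Id$ to $\phi_i$ satisfying $\|\partial_s H_i\|_{C^0} \to 0$ and with $\|dH_i\|_{C^0}$ uniformly bounded. The homotopy formula for currents yields
\[
\phi_{i*}T - T = \partial S_i + R_i, \qquad S_i \ldef H_{i*}([0,1]\times T), \quad R_i \ldef H_{i*}([0,1]\times \partial T).
\]
Contracting $H_i^*\omega$ against $\partial_s$ extracts a factor of $\|\partial_s H_i\|_{C^0}$, giving
\[
M(S_i) \le \|V_i\|_{C^0} \cdot \|dH_i\|_{C^0}^k \cdot M(T) \longrightarrow 0.
\]

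\textbf{Boundary term, approximation, and conclusion.} When $T$ is integral (so $M(\partial T) < \infty$), the symmetric estimate $M(R_i) \le \|V_i\|_{C^0} \cdot \|dH_i\|_{C^0}^{k-1} \cdot M(\partial T) \to 0$ combines with the previous step to give $\|\phi_{i*}T - T\|_F \le M(S_i) + M(R_i) \to 0$. For a general rectifiable $T$, where $\partial T$ may have infinite mass, I approximate $T$ in the flat norm by an integral chain $T_\epsilon$ with $\|T - T_\epsilon\|_F < \epsilon$, using density of polyhedral (hence integral) chains in $\mathcal{R}_k(M)$ in the flat norm, available via an isometric embedding into Euclidean space and Federer's deformation theorem. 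Applying the integral case to $T_\epsilon$ gives $\|\phi_{i*}T_\epsilon - T_\epsilon\|_F \to 0$, while the remainder $\phi_{i*}(T-T_\epsilon) - (T-T_\epsilon)$ has flat norm $\le C\epsilon$ by uniform boundedness of $\phi_{i*}$ on the flat norm (from the Lipschitz bounds). Taking $\limsup_{i\to\infty}$ and then $\epsilon \to 0$ concludes.

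The hardest step will be this last one: approximating a rectifiable current by integral currents in the flat norm when $M(\partial T) = \infty$ is delicate, and establishing uniform flat-norm operator bounds on $\phi_{i*}$ relies carefully on the $C^1$-convergence extracted in the first step.
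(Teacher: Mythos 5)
Your proposal is correct, and its skeleton coincides with the paper's first proof of Theorem \ref{TC}: approximate in the flat norm by polyhedral chains, join $\psi_{i*}$ and $\psi_*$ of the approximant by a thin geodesic homotopy whose height is $d_{C^0}(\psi_i,\psi)\to 0$, and kill the prism and its sides by mass estimates in which the closed-one-form hypothesis supplies the missing derivative control. The differences are in the machinery, and they are real: where the paper feeds the Jacobian estimates through Corollary \ref{cor:metric_convergence} (uniform convergence of $\psi_i^*g$) and Lemma \ref{lem:bounded_diff}, you apply the hypothesis to the exact forms $df$ and to the components of an embedding $M\hookrightarrow\mathbb{R}^N$ to extract full $C^1$-convergence $\psi_i\to\psi$ at the outset (essentially ``Fact (B)'' of the paper's second, duality-based proof, globalized), and you then run the prism argument once, at the level of currents, via the homotopy formula $\phi_{i*}T-T=\partial H_{i*}([0,1]\times T)+H_{i*}([0,1]\times\partial T)$ with $\phi_i=\psi^{-1}\circ\psi_i$, rather than simplex by simplex. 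Your route buys two things: it justifies the uniform Lipschitz bounds honestly (the paper's Step 1 attributes them to $C^0$-convergence plus smoothness of each map, which by itself does not give uniformity in $i$; it is the hypothesis, through Lemma \ref{lem:bounded_diff} or your $C^1$ upgrade, that delivers them), and it makes explicit that the boundary term of the homotopy needs $M(\partial T)<\infty$, which you restore by approximating a general rectifiable $T$ by integral/polyhedral chains in the flat norm together with the uniform flat-norm bound on $\phi_{i*}$ — the same Federer polyhedral approximation the paper invokes in its Step 1, so the step you flag as delicate is exactly as standard and as citable as the paper's own reduction. What the paper's version buys in exchange is the intermediate statement on pulled-back metrics (Corollary \ref{cor:metric_convergence}), which it reuses elsewhere; mathematically the two arguments are equivalent in strength.
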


	The paper is organized as follows: Section \ref{sec:basic_concepts} reviews essential definitions. Section \ref{sec:proofs} presents the proofs of the key lemmas and the main theorem. Section \ref{sec:applications} discusses applications to specific geometries.

	\section{Basic Concepts}\label{sec:basic_concepts}
	
	This section introduces the essential concepts and notations used throughout this paper. For a more detailed background, we refer the reader to the texts by Lee \cite{Lee13}, Hirsch \cite{Hirsch76}, and Federer \cite{Federer69}.
	Recall that if $f: M \to N$ is a diffeomorphism between manifolds, $\pushforward{f}$ maps tangent vectors on $M$ to tangent vectors on $N$ (pushforward), and it also maps currents on $M$ to currents on $N$, while $\pullback{f}$ maps forms on $N$ to forms on $M$ (pullback).
	
	\subsection{The $C^r$-topology on $\Diff(M)$}\label{Cr}
	Let $M$ and $N$ be smooth manifolds, and let $f\in C^r(M, N)$ with $r\leq \infty$. Consider a local chart $(U,\phi)$ on $M$ and a compact subset $K \subset U$ such that $f(K)\subset V$, where $V$ is the domain of a chart $(V, \psi)$. For any $\eta > 0$, we define $N^r(f, (U, \phi), (V, \psi), \eta, K)$ as the set of all $g\in C^r(M, N)$ satisfying $g(K)\subset V$ and
	$$
	\norm{ D^k\bar{f}(x)-D^k\bar{g}(x) }_{op} \leq \eta,
	$$
	for all $x\in \phi(K)$ and $0\leq k \leq r$, where $\bar{f}=\psi\circ f \circ \phi^{-1}$ and $ \bar{g}=\psi\circ g \circ \phi^{-1}$, and $\norm{\cdot}_{op}$ denotes a suitable operator norm on the space of derivatives.
	
	\begin{definition}[\cite{Hirsch76}]
		The sets $N^r(f, (U, \phi), (V, \psi), \eta, K)$ form a subbasis for a topology on $ C^r(M, N)$, which is called the \textbf{compact-open $C^r$ topology} (or weak $C^r$ topology).
	\end{definition}
	
	In the compact-open $C^r$ topology, a neighborhood of a function $f\in C^r(M,N)$ is formed by finite intersections of sets of the form $N^r(f, (U, \phi), (V, \psi), \eta, K)$.
	
	\begin{definition}[\cite{Hirsch76}]
		The \textbf{$C^{\infty}$ compact-open topology} is the topology induced by the family of inclusions $C^{\infty}(M, N)\hookrightarrow C^r(M, N)$ for all finite $r$. It is the coarsest topology making all these inclusions continuous.
	\end{definition}
	
	When $M$ is compact, the $C^0$ compact-open topology (also called the uniform topology) is metrizable \cite{Hirsch76}. A standard metric on $C(M,N)$ is given by
	\begin{equation}\label{dc0}
		d_{C^0}(f,h) = \sup_{x\in M}d_N(f(x), h(x)),
	\end{equation}
	where $d_N$ is the distance on $N$ induced by a Riemannian metric. For the space of homeomorphisms $\Homeo(M)$ (or diffeomorphisms $\Diff(M)$) on a compact manifold $M$, the topology is often induced by the metric
	\begin{equation}\label{d0}
		d_0(f, h) = \max(d_{C^0}(f,h), d_{C^0}(f^{-1},h^{-1})),
	\end{equation}
	ensuring that convergence of $f$ to $h$ implies convergence of $f^{-1}$ to $h^{-1}$. In this paper, $f_k \xrightarrow{C^0} f$ means $d_{C^0}(f_k, f) \to 0$. Since $M$ is compact and $f_k, f$ are assumed to be diffeomorphisms, this is equivalent to $d_0(f_k, f) \to 0$ if $f$ is a diffeomorphism. On the space of continuous paths $\lambda : [0, 1] \longrightarrow \Homeo(M)$ such that $\lambda(0)=\Id_M$, the $C^0$-topology is induced by the metric
	\begin{equation}\label{dc01}
		\bar{d}(\lambda,\mu) =\max_{t\in [0,1]}d_0(\lambda(t), \mu(t)).
	\end{equation}

	\subsection{Differential forms}\label{k forms}
	Let $V$ be a vector space over $\mathbb R$. A map
	$\omega : \underbrace{V\times V \times \cdots\times V}_{p~\text{times}} \longrightarrow \mathbb R$ is called a $p$-linear map if it is linear in each argument. The map $\omega$ is called alternating if it is antisymmetric under the interchange of any two arguments. We denote by $A^p(V)$ the space of alternating $p$-linear maps on $V$.
	
	\begin{definition}[\cite{Lee13}]
		A differential $k$-form (or a differential form of degree $k$) $\alpha$ on a manifold $M$ is a smooth section of the $k$-th exterior power of the cotangent bundle $\Lambda^k(T^*M)$. Specifically, for each $x\in M$, $\alpha_x \in A^k(T_xM)$, and for any smooth vector fields $X_1, \dots,  X_k\in \Xfrak$, the map $\alpha(X_1,\dots,X_k): M\longrightarrow \mathbb R$ defined by $x \longmapsto \alpha_x(X_{1,x}, \dots, X_{k,x})$ is smooth.
	\end{definition}
	Let $\Omk{k}$ denote the space of all smooth differential $k$-forms on $M$. We denote the space of smooth $k$-forms with compact support by $\Omkc{k}$. Since $M$ is assumed compact throughout, $\Omkc{k} = \Omk{k}$.
	\subsection{Inner product of differential forms}
	Let $g = (g_{ij})$ be a fixed Riemannian metric on $M$. This induces an isomorphism $\sharp: \Xfrak \to \Omk{1}$. If $X = \sum X^i \frac{\partial}{\partial x^i}$, then $X^\sharp = \sum_{j,k} g_{jk}X^k dx^j$. The inverse map $\flat: \Omk{1} \to \Xfrak$ takes $\alpha = \sum \alpha_j dx^j$ to $\alpha^\flat = \sum_{j,k} g^{jk}\alpha_k \frac{\partial}{\partial x^j}$, where $(g^{jk})$ is the inverse matrix of $(g_{jk})$. This induces an inner product on $T_x^*M$ (and thus on $\Omk{1}$) for each $x \in M$:
	\begin{equation}\label{E1}
		g^{-1}_x(\alpha, \beta) := g_x(\alpha^\flat, \beta^\flat) = \sum_{j,k} g^{jk}(x) \alpha_j(x) \beta_k(x),
	\end{equation}
	for $\alpha = \sum \alpha_j dx^j$ and $\beta = \sum \beta_k dx^k$.
	This inner product extends to a pointwise inner product $\langle \cdot, \cdot\rangle_x$ on $\Omk{k}$ for arbitrary $k$. If $\{e^1, \dots, e^n\}$ is a local orthonormal coframe field, then for $\alpha = \sum a_{I} e^I$ and $\beta = \sum b_{J} e^J$ (where $I, J$ are multi-indices $i_1 < \dots < i_k$),
	\begin{equation}\label{E2}
		\langle\alpha, \beta \rangle_x := \sum_{I} a_I(x) b_I(x).
	\end{equation}
	Equivalently, for simple $k$-forms, $\langle \alpha_1\wedge\dots\wedge\alpha_k, \beta_1\wedge\dots\wedge\beta_k \rangle_x := \det(g^{-1}_x(\alpha_i, \beta_j))$.
	The global $L^2$ inner product for $k$-forms $\alpha, \beta \in \Omk{k}$ is defined as
	\begin{equation}\label{E3}
		\langle \alpha, \beta \rangle_{L^2} := \int_M \langle \alpha, \beta\rangle_x \, d\Vol_g,
	\end{equation}
	where $d\Vol_g$ is the Riemannian volume form. We denote by $ \| \cdot \|_2$ the norm induced on $\Omega^k(M)$ by this inner product; see \cite{Cartan06}, \cite{Federer69}.
	
	\begin{definition} Let $\alpha\in \Omk{k}$. The pointwise norm $|\alpha(x)|$ (or $|\alpha(x)|_g$) is given by $|\alpha(x)| = \sqrt{\langle \alpha, \alpha\rangle_x}$. The supremum norm (or $C^0$-norm) of $\alpha$ is defined as
		\[ |\alpha|_0 := \sup_{x \in M} |\alpha(x)|. \]
	\end{definition}

	\subsection{Brief overview of currents}
	In geometric measure theory, currents are a generalization of the notion of a submanifold. They are defined as continuous linear functionals acting on compactly supported differential forms. A $k$-current $T$ on an $n$-manifold $M$ is a continuous linear functional on the space of compactly supported smooth $k$-forms $\Omega^k_c(M)$, mapping to the real numbers:
	$T : \Omega^k_c(M) \rightarrow \mathbb{R}.$ 
	The space $\mathcal{D}_k(M)$ of $k$-currents is equipped with the weak-* topology. A sequence of currents $\{T_i\}$ converges in the weak-* topology to $T$ if $\lim_{i \to \infty} T_i(\omega) = T(\omega)$ for all $\omega \in \Omega^k_c(M)$. Given a smooth, oriented $k$-submanifold $S$ of $M$, the integration of $k$-forms over $S$ defines a $k$-current, which we denote by $[S]$, acting on a $k$-form $\omega \in \Omega^k_c(M)$ as:
	$ [S](\omega) = \int_S \omega.$
	The action of a $k$-current $T$ on a $k$-form $\omega$ is often written as $T(\omega)$ or $\langle T, \omega \rangle.$ Currents are particularly useful in geometric measure theory because they allow us to study non-smooth geometric objects with singularities, limits of submanifolds, and boundaries in singular objects. Rectifiable currents, which generalize the notion of a submanifold, are very important in geometric measure theory.
	
	\begin{enumerate}
		\item A rectifiable $k$-current on a manifold $M$ is a current that can be represented in the form $T(\omega) = \int_S \theta(x) \omega(x)$ for $\omega \in \Omega^k_c(M)$, where $S$ is a rectifiable $k$-set in $M$ and $\theta: S \to \mathbb{Z}$ is an integer-valued density function such that $\int_S |\theta| d\mathcal{H}^k < \infty$. Here, the integral is understood with respect to the $k$-dimensional Hausdorff measure $\mathcal{H}^k$ on $S$.
		\begin{itemize}
			\item Rectifiable currents generalize the notion of integration over oriented rectifiable $k$-dimensional sets with integer multiplicities.
			\item The boundary $\partial T$ of a rectifiable $k$-current $T$ is a rectifiable $(k-1)$-current.
		\end{itemize}
		A rectifiable $k$-set is a subset of $M$ that can be covered, up to a set of $k$-dimensional Hausdorff measure zero, by a countable family of smooth $k$-submanifolds of $M$.
		
		\item The mass of a rectifiable $k$-current $T$ represented as $T(\omega) = \int_S \theta(x) \omega(x)$ is given by
		$$\|T\|_1 = \int_S |\theta(x)| d\mathcal{H}^k(x).$$
		For a rectifiable $k$-current $T$ defined by integration over an oriented rectifiable $k$-set $S$ (with multiplicity $\theta \equiv 1$), its mass is $\|T\|_1 = \mathcal{H}^k(S) = \int_S d\mathcal{H}^k$.
		
		\item The flat norm of a current $T$ is defined as:
		$$ \|T\|_F =  \inf \left\lbrace \|R\|_1 + \|S\|_1 : T = R + \partial S\right\rbrace,$$
		where $R$ is a rectifiable $k$-current, $S$ is a rectifiable $(k+1)$-current, and $\|\cdot\|_1$ denotes the mass of the current.
	\end{enumerate}
	
	\subsubsection{The action of diffeomorphisms on currents}
	
	Let $\psi: M  \to M$ be a diffeomorphism. The pushforward of a current $T$ by $\psi$, denoted by $\psi_\ast T$, is defined by:
	$(\psi_\ast T)(\omega) = T(\psi^\ast\omega),$ 
	where $\omega \in \Omega^k_c(M)$ is a differential form and $\psi^\ast$ is the pullback operation.
	Intuitively, when we apply a diffeomorphism to a current, we are pulling back the form by the inverse diffeomorphism and then integrating.
	
	\subsubsection{Linking pullbacks and currents}
	
	The pushforward operation is central to the study of how diffeomorphisms transform geometric objects. The stability of the pushforward is thus related to the stability of the pullback, which is the main result of this work. The definition of the pushforward of a current involves the pullback operation, and this suggests that our results on the $C^0$-stability of pullbacks may have implications for the stability of pushforwards. With a suitable norm on the space of currents, we can explore if a sequence of pushforwards converges to a limit under the action of diffeomorphisms.
	
	\subsubsection{Illustrative example}
	Let us consider a very simplified example using 1-currents on the torus $\mathbb T^2$.
	Consider a curve $\gamma$ in $\mathbb T^2$. The 1-current $T_\gamma$ defined by integrating over the curve is:
	$T_\gamma(\omega) = \int_\gamma \omega,$
	where $\omega$ is a 1-form on $\mathbb T^2$. Consider the diffeomorphisms $\psi: \mathbb T^2 \to \mathbb T^2$,  $\psi(\theta,\phi) = (\theta + \phi, \phi)$ and a sequence of diffeomorphisms such that $\psi_i: \mathbb T^2 \to \mathbb T^2$ with $\psi_i(\theta,\phi) = (\theta + (1-\frac{1}{i})\phi, \phi)$ such that $\psi_i \xrightarrow{C^0} \psi$. If $\gamma$ is a simple straight line on $\mathbb T^2$, we would have
	$(\psi_{i\ast}T_\gamma)(\omega) = T_\gamma(\psi_i^\ast\omega) = \int_\gamma \psi_i^\ast\omega.$
	Assuming our main result holds, $\psi_i^\ast\omega$ converges to $\psi^\ast\omega$ uniformly (in $C^0$ norm). Then, since $\gamma$ is a compact path, $\int_\gamma \psi_i^\ast\omega$ should converge to $\int_\gamma \psi^\ast\omega$, therefore:
	$\lim_{i\to\infty} (\psi_{i\ast}T_\gamma)(\omega) =  \lim_{i\to\infty} \int_\gamma \psi_i^\ast\omega = \int_\gamma \psi^\ast\omega = (\psi_{\ast}T_\gamma)(\omega).$
	That is, the pushforward of the $1$-current will be stable under $C^0$-perturbations.
	\subsection{Hausdorff Distance}
	
	The proofs in this paper rely on the notion of convergence of sets. The appropriate tool for this is the Hausdorff distance, which measures how far two subsets of a metric space are from each other.
	
	\begin{definition}[Hausdorff Distance]
		Let $(X, d)$ be a metric space. For any non-empty compact subsets $A, B \subset X$, the Hausdorff distance $d_H(A, B)$ is defined as
		\[ d_H(A, B) = \max \left( \sup_{a \in A} \inf_{b \in B} d(a,b), \sup_{b \in B} \inf_{a \in A} d(a,b) \right). \]
	\end{definition}
	
	Intuitively, $d_H(A, B)$ is the smallest number $\epsilon$ such that every point in $A$ is within distance $\epsilon$ of some point in $B$, and every point in $B$ is within distance $\epsilon$ of some point in $A$. A sequence of compact sets $\{A_k\}$ converges to a compact set $A$ in the Hausdorff metric if $\lim_{k \to \infty} d_H(A_k, A) = 0$. A link between uniform convergence of maps and Hausdorff convergence of their images is the following standard result: if a sequence of continuous maps $\{\phi_k\}$ converges uniformly to a map $\psi$ on a compact set $K$, then the sequence of image sets $\{\phi_k(K)\}$ converges to the image set $\psi(K)$ in the Hausdorff distance. This fact is used implicitly in the proof of Lemma \ref{Key-current}.\\
	
	We shall need, and sometimes use without mention, the following result.
	\begin{theorem}(Lebesgue Differentiation Theorem, simplified)
		Let $M$ be a manifold with a Borel measure $\mu$. Let $\mathcal{B}$ be a differentiation basis on $M$, i.e., a family of measurable sets such that for each $x \in M$ there is a sequence $(B_i)_{i \in \mathbb{N}}$ in $\mathcal{B}$ such that $x \in B_i$ for all $i$ and $\diam(B_i) \to 0$ as $i \to \infty$. For any $f \in L^1(M, \mu)$, we have for $\mu$-almost every $x \in M$,
		$$
		\lim_{i \to \infty} \frac{1}{\mu(B_i)} \int_{B_i} f(y) \, d\mu(y) = f(x).
		$$
	\end{theorem}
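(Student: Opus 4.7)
The plan is to reduce the almost-everywhere convergence to two ingredients: pointwise convergence on a dense subclass (continuous functions) and a maximal function bound that controls the $L^1$ residual. Write $A_i f(x) \ldef \mu(B_i)^{-1}\int_{B_i} f \, d\mu$ and set $\overline{D}f(x) = \limsup_{i\to\infty} A_i f(x)$, $\underline{D}f(x) = \liminf_{i\to\infty} A_i f(x)$. It suffices to show that for each $\lambda > 0$, the set $E_\lambda = \{x : |\overline{D}f(x) - f(x)| + |\underline{D}f(x)-f(x)| > \lambda\}$ has $\mu$-measure zero; taking the countable union over $\lambda = 1/n$ then yields the conclusion.

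The first step is to introduce the maximal operator $M_\mathcal{B}f(x) \ldef \sup_i A_i|f|(x)$ associated with the basis $\mathcal{B}$ and to establish a weak-type $(1,1)$ inequality of the form $\mu(\{M_\mathcal{B}f > \lambda\}) \leq C\lambda^{-1}\|f\|_{L^1}$. This rests on a Vitali-type covering lemma applied to a countable subcollection of $\mathcal{B}$ covering the superlevel set, together with the local doubling of $\mu$ on the compact manifold $M$ (where $\mathcal{B}$ is tacitly the family of sufficiently small geodesic balls, or a basis satisfying Vitali's property). The second step is to approximate $f \in L^1(M,\mu)$ by a continuous $g$ with $\|f-g\|_{L^1} < \eta$; this is standard since $C(M)$ is dense in $L^1(M,\mu)$ on a compact manifold. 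For continuous $g$, uniform continuity on the compact set $M$ combined with $\diam(B_i) \to 0$ yields $A_i g(x) \to g(x)$ for every $x$.

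Now decompose $f = g + h$ with $h = f - g$. Then
\[
\limsup_i |A_i f(x) - f(x)| \;\leq\; \limsup_i |A_i g(x) - g(x)| + M_\mathcal{B} h(x) + |h(x)| \;=\; M_\mathcal{B} h(x) + |h(x)|.
\]
Applying the weak-type $(1,1)$ inequality to $M_\mathcal{B} h$ and the Chebyshev inequality to $|h|$, the measure of $\{x : \limsup_i |A_i f(x) - f(x)| > \lambda\}$ is bounded by $2(C+1)\lambda^{-1}\eta$. Since $\eta > 0$ is arbitrary, this set has $\mu$-measure zero, which is exactly what is required; the symmetric treatment with $\liminf$ is identical.

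The hard part will be the weak-type $(1,1)$ bound for the abstract basis $\mathcal{B}$. As stated, the hypotheses on $\mathcal{B}$ (merely that diameters shrink to each point) do not by themselves yield such an inequality — one needs a Vitali or Besicovitch type covering property, which in turn rests on geometric structure of $M$ and doubling of $\mu$. In the setting of this paper, $\mu$ is the Riemannian volume and one takes $\mathcal{B}$ to consist of sufficiently small geodesic balls; compactness of $M$ yields uniform local doubling, and the classical Vitali covering argument applies. Everything else is a routine $\eps/3$-style argument once this maximal inequality is established.
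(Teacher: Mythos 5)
The paper does not actually prove this statement: it is recalled as background (``we shall need, and sometimes use without mention, the following result'') with no argument supplied, so there is nothing internal to compare your proof against. Your route is the classical one --- a weak-type $(1,1)$ maximal inequality for the averaging operators, density of $C(M)$ in $L^1(M,\mu)$, pointwise convergence for continuous functions via uniform continuity and $\diam(B_i)\to 0$, and an $\eps/3$ decomposition $f=g+h$ --- and the quantitative bookkeeping ($\mu(E_\lambda)\le 2(C+1)\lambda^{-1}\eta$ for every $\eta>0$, then intersect over $\lambda=1/n$) is correct. This is the standard and essentially the only proof of the theorem, so if the paper had supplied one it would almost certainly look like yours.

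The one substantive point, which you identify yourself and which is worth stating plainly: the theorem as written in the paper is \emph{false} under its stated hypotheses. A ``differentiation basis'' required only to contain, for each $x$, sets containing $x$ with diameters shrinking to zero does not differentiate $L^1$; the classical counterexample is the basis of all rectangles in $\mathbb{R}^2$ with unrestricted eccentricity, which fails to differentiate $L^1$ even for Lebesgue measure. The missing hypothesis is a Vitali or Besicovitch covering property (or bounded eccentricity plus doubling of $\mu$), which is exactly what is needed to run the weak-type $(1,1)$ estimate you defer to Step~1. In the context in which the paper invokes the theorem ($\mu$ the Riemannian volume of a compact manifold, $\mathcal{B}$ small geodesic balls), compactness gives uniform local doubling, the Vitali covering lemma applies, and your argument closes. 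So your proof is correct for the setting actually used, and your flag that the abstract statement needs strengthening is a legitimate correction to the paper rather than a gap in your argument. Two minor points to tidy: you should note that $\mu(B_i)>0$ is needed for the averages to be defined, and the decomposition via $\overline{D}f$ and $\underline{D}f$ can be collapsed into the single quantity $\limsup_i\abs{A_if(x)-f(x)}$, which is what your displayed estimate actually controls.
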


	\section{Proofs of the Main Statements}\label{sec:proofs}

	\begin{proof}[Proof of Lemma \ref{Key-0-current}]
		We are given three conditions: (1) $\psi_i \xrightarrow{C^0} \psi$, (2) $\eta$ is a closed 1-form, and (3) the sequence of 1-forms $\pullback{\psi_i} \eta$ converges uniformly to a limit $\theta$. We must show that $\theta = \pullback{\psi} \eta$. 	Let $X$ be any smooth vector field on $M$ with flow $\phi_t$. For a fixed $z \in M$ and $t > 0$, consider the path $\gamma(u) = \phi_u(z)$ for $u \in [0, t]$. To show the 1-forms $\theta$ and $\pullback{\psi}\eta$ are identical, it suffices to show they have the same integral over any such path $\gamma$. Construct a 2-chain (a homotopy) $H_i$ between the paths $\psi_i \circ \gamma$ and $\psi \circ \gamma$ using geodesics. Its boundary is $$\bdry H_i = (\psi_i \circ \gamma) - (\psi \circ \gamma) + \sigma_i^0 - \sigma_i^t,$$ where $\sigma_i^u$ is the geodesic from $\psi(\gamma(u))$ to $\psi_i(\gamma(u))$.	Since $\eta$ is closed, Stokes' Theorem gives $\int_{\bdry H_i} \eta = 0$, which implies
		$$ \int_{\psi_i \circ \gamma} \eta - \int_{\psi \circ \gamma} \eta = \int_{\sigma_i^t} \eta - \int_{\sigma_i^0} \eta. $$
		By definition of pullback, this is $$\int_\gamma \pullback{\psi_i} \eta - \int_\gamma \pullback{\psi} \eta = \int_{\sigma_i^t} \eta - \int_{\sigma_i^0} \eta.$$
		The lengths of the geodesic paths $\sigma_i^u$ are bounded by $d_{C^0}(\psi_i, \psi)$. Since $\eta$ is smooth on a compact manifold, it is bounded. Thus, the right-hand side goes to zero as $i \to \infty$ due to the $C^0$-convergence of $\{\psi_i\}$. Therefore,
		$$ \lim_{i\to\infty} \int_\gamma \pullback{\psi_i} \eta = \int_\gamma \pullback{\psi} \eta. $$
		By our assumption of uniform convergence, we can exchange the limit and integral on the left:
		$ \int_\gamma \theta = \lim_{i\to\infty} \int_\gamma \pullback{\psi_i}\eta. $
		Combining these results gives $\int_\gamma \theta = \int_\gamma \pullback{\psi}\eta$. Since this holds for paths generated by any vector field $X$, we can use the Lebesgue differentiation theorem to derive that the 1-forms $ \theta$ and $ \pullback{\psi}\eta$ must be identical.
		
	\end{proof}

	\begin{proof}[Proof of Lemma \ref{Key-current}]
		\begin{itemize}
			
			\item 	\textbf{Part 1.} We are given $\phi_k \xrightarrow{C^0} \psi$ and that the sequence of $p$-forms $\beta_k= \pullback{\phi_k}\alpha$ converges uniformly to a limit $\beta$. We must show that $\beta = \pullback{\psi}\alpha$. It suffices to show that $\beta$ and $\pullback{\psi}\alpha$ act identically on every rectifiable $p$-current $S \in \Rk{p}$.	First, by the uniform convergence of forms $\beta_k \to \beta$, and the fact that any rectifiable current $S$ is a continuous functional on the space of continuous forms (i.e., it is an order 0 current), we can exchange the limit and the functional:
			\[ S(\beta) = \lim_{k\to\infty} S(\beta_k) = \lim_{k\to\infty} S(\pullback{\phi_k}\alpha). \]
			By the definition of the pushforward of a current, $S(\pullback{\phi_k}\alpha) = (\pushforward{(\phi_k)}S)(\alpha)$. So we need to evaluate the limit: $S(\beta) = \lim_{k\to\infty} (\pushforward{(\phi_k)}S)(\alpha).$ 
			Let us first consider the case where $S = [\sigma]$ is the current associated with a single smooth $p$-simplex $\sigma: \Delta^p \to M$. Then $\pushforward{\phi_k}[\sigma] = [\phi_k \circ \sigma]$. The action on $\alpha$ is:
			\[ (\pushforward{(\phi_k)}[\sigma])(\alpha) = \int_{\phi_k \circ \sigma} \alpha = \int_{\phi_k(\im \sigma)} \alpha. \]
			Since $\phi_k \to \psi$ uniformly on the compact set $\im \sigma$, the image submanifolds $\phi_k(\im \sigma)$ converge to $\psi(\im \sigma)$ in the Hausdorff distance. Because $\alpha$ is a smooth form on a compact manifold, the value of the integral is continuous with respect to this variation of the domain. Therefore,
			\[ \lim_{k\to\infty} \int_{\phi_k(\im \sigma)} \alpha = \int_{\psi(\im \sigma)} \alpha = (\pushforward{\psi}[\sigma])(\alpha). \]
			By linearity and a density argument, this result holds for any rectifiable current $S$. Thus, for any $S \in \Rk{p}$, we have shown
			$\lim_{k\to\infty} (\pushforward{(\phi_k)}S)(\alpha) = (\pushforward{\psi}S)(\alpha).$ 
			Combining our lines of reasoning, we get $S(\beta) = (\pushforward{\psi}S)(\alpha)$. Using the pushforward definition again, $(\pushforward{\psi}S)(\alpha) = S(\pullback{\psi}\alpha)$.
			So, $S(\beta) = S(\pullback{\psi}\alpha)$ for all rectifiable currents $S$. This implies that the forms are identical: $\beta = \pullback{\psi}\alpha$.

			\item	\noindent\textbf{Part 2: Weak-* Convergence }
			
			We are given the strong hypothesis that for any smooth test form $\omega$, the sequence $\pullback{\phi_k}\omega$ converges uniformly. Let the limit be denoted $\beta_\omega$; by Part 1 of this lemma, we can identify this limit: $\beta_\omega = \pullback{\psi}\omega$. Thus, the hypothesis implies that $\pullback{\phi_k}\omega \to \pullback{\psi}\omega$ uniformly for any $\omega$. We want to show weak-* convergence of the pushforwards, which means showing that for any $T \in \Dp{p}$ of order 0 and any test form $\omega$, $\lim_{k\to\infty} (\pushforward{(\phi_k)}T)(\omega) = (\pushforward{\psi}T)(\omega).$ By definition of the pushforward, this is equivalent to showing
			\[ \lim_{k\to\infty} T(\pullback{\phi_k}\omega) = T(\pullback{\psi}\omega), \quad \text{or} \quad \lim_{k\to\infty} T(\pullback{\phi_k}\omega - \pullback{\psi}\omega) = 0. \]
			A current $T$ is of order 0 if it is a continuous linear functional on the space of $p$-forms equipped with the $C^0$ (supremum) norm. This means there exists a constant $C_T$ such that for any form $\eta$,
			\[ |T(\eta)| \le C_T \norm{\eta}_0. \]
			Applying this to our difference form $\eta_k = \pullback{\phi_k}\omega - \pullback{\psi}\omega$:
			\[ |T(\pullback{\phi_k}\omega - \pullback{\psi}\omega)| \le C_T \cdot \norm{\pullback{\phi_k}\omega - \pullback{\psi}\omega}_0. \]
			From our hypothesis and Part 1, we know that the right-hand side converges to zero as $k \to \infty$. Therefore, the left-hand side must also converge to zero. This proves the weak-* convergence. For general currents $T$ of arbitrary order, continuity requires convergence of the test form in the $C^\infty$ topology. The hypothesis only establishes $C^0$ convergence of the pullback $\pullback{\phi_k}\omega$. Therefore, the weak-* convergence cannot be concluded for all currents $T \in \Dp{p}$ based solely on this hypothesis. However, for order 0 currents, the result holds.
		\end{itemize}
	\end{proof}

	\begin{corollary}[Uniform Convergence of Pulled-Back Metric]\label{cor:metric_convergence}
		Let $M$ be a compact smooth manifold with a Riemannian metric $g$. Let $\{\phi_k\}_{k=1}^\infty \subseteq \Diff(M)$ be a sequence of smooth diffeomorphisms with $\phi_k \xrightarrow{C^0} \psi \in \Diff(M)$. Assume that for any closed 1-form $\eta$, $\phi_k^*\eta \to \psi^*\eta$ uniformly. Then the sequence of pulled-back metric tensors $\{\phi_k^* g\}$ converges uniformly to $\psi^* g$. That is, $\lim_{k\to\infty} \abs{\phi_k^*g - \psi^*g}_0 = 0,$ 
		where $\abs{\cdot}_0$ denotes the supremum norm for $(0,2)$-tensors, defined as
		$ \abs{T}_0 = \sup_{x \in M} \sup_{\substack{v,w \in T_xM \\ \norm{v}_g=1, \norm{w}_g=1}} \abs{T_x(v,w)}.$
	\end{corollary}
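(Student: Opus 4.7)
The plan is to express the metric $g$ as a finite global $C^\infty(M)$-combination of tensor products of the form $df^i_\alpha \otimes df^j_\alpha$, where each $f^i_\alpha$ is a smooth function on all of $M$. Since every exact $1$-form is closed, the hypothesis then applies directly to each $df^i_\alpha$, yielding uniform convergence $\phi_k^* df^i_\alpha \to \psi^* df^i_\alpha$; combining these with uniform convergence of the scalar coefficients $(\cdot) \circ \phi_k$ through bilinearity will give the conclusion. The single obstacle is that, locally, the metric reads $g = \sum g_{ij}\, dx^i \otimes dx^j$ in the coordinate $1$-forms $dx^i$, which are \emph{not} globally defined on $M$ and hence are not themselves covered by the hypothesis. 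The first real task is therefore to globalize these coordinate forms.

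First I would fix a finite coordinate atlas $\{(U_\alpha, x^1_\alpha, \ldots, x^n_\alpha)\}_{\alpha=1}^K$ on $M$, a subordinate partition of unity $\{\chi_\alpha\} \subset C^\infty(M)$ with $\supp \chi_\alpha \subset U_\alpha$ and $\sum_\alpha \chi_\alpha \equiv 1$, and smooth cutoffs $\rho_\alpha \in C^\infty_c(U_\alpha)$ satisfying $\rho_\alpha \equiv 1$ on an open neighborhood of $\supp \chi_\alpha$. Setting $f^i_\alpha \ldef \rho_\alpha \cdot x^i_\alpha$ (extended by zero outside $U_\alpha$) produces a globally smooth function on $M$ whose exterior derivative $df^i_\alpha$ is an exact, hence closed, $1$-form on $M$; moreover $df^i_\alpha \equiv dx^i_\alpha$ on an open neighborhood of $\supp \chi_\alpha$. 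Writing $g|_{U_\alpha} = \sum_{i,j} g^\alpha_{ij}\, dx^i_\alpha \otimes dx^j_\alpha$ and multiplying through by $\chi_\alpha$, one obtains the global identity
\begin{equation*}
 g \;=\; \sum_\alpha \chi_\alpha g \;=\; \sum_{\alpha, i, j} \chi_\alpha g^\alpha_{ij}\, df^i_\alpha \otimes df^j_\alpha,
\end{equation*}
in which every factor is now a globally defined smooth object on $M$.

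Pulling back then gives $\phi_k^* g = \sum_{\alpha,i,j} \bigl((\chi_\alpha g^\alpha_{ij}) \circ \phi_k\bigr)\, \phi_k^* df^i_\alpha \otimes \phi_k^* df^j_\alpha$, and analogously for $\psi^* g$. To estimate the supremum norm of the difference I would invoke three ingredients: (i) $(\chi_\alpha g^\alpha_{ij}) \circ \phi_k \to (\chi_\alpha g^\alpha_{ij}) \circ \psi$ uniformly on $M$, by uniform continuity of the coefficient on the compact manifold together with $\phi_k \xrightarrow{C^0} \psi$; (ii) $\phi_k^* df^i_\alpha \to \psi^* df^i_\alpha$ uniformly, directly from the hypothesis applied to the closed form $df^i_\alpha$; and (iii) the family $\{\phi_k^* df^i_\alpha\}_k$ is uniformly bounded in $|\cdot|_0$, a consequence of (ii) and the continuity of $\psi^* df^i_\alpha$ on compact $M$. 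The difference $\phi_k^* g - \psi^* g$ then expands, via the telescoping identity $A_1 \otimes A_2 - B_1 \otimes B_2 = (A_1 - B_1) \otimes A_2 + B_1 \otimes (A_2 - B_2)$ (applied in turn to the scalar factor and each tensor factor), into a finite sum of terms each carrying a factor that tends uniformly to zero. The main obstacle is really only the globalization step above; once the metric has been rewritten in terms of exact global $1$-forms, the convergence argument is a routine trilinear continuity computation and the desired estimate $|\phi_k^* g - \psi^* g|_0 \to 0$ drops out.
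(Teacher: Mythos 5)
Your proof is correct and follows essentially the same route as the paper: localize $g$ via a partition of unity, apply the hypothesis to (globalized) coordinate $1$-forms, and conclude by a telescoping estimate using uniform convergence of the scalar coefficients, uniform convergence of the pulled-back $1$-forms, and their uniform boundedness. The only difference is presentational: you carry out explicitly, via the cutoffs $\rho_\alpha$ and the exact forms $d(\rho_\alpha x^i_\alpha)$, the globalization step that the paper dispatches with a parenthetical remark about bump functions, which makes your version slightly more self-contained.
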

	
	\begin{proof}
		Let $\{\mathcal{V}_j, x_j=(x_j^1, \dots, x_j^n)\}_{j=1}^N$ be a finite cover by coordinate charts and $\{\tau_j\}_{j=1}^N$ a subordinate partition of unity. Write $g = \sum_{j=1}^N \tau_j g =: \sum_{j=1}^N g^j$, where $g^j$ is a smooth $(0,2)$-tensor supported in $\mathcal{V}_j$. By linearity of pullback and the triangle inequality for the supremum norm, it suffices to show uniform convergence $\phi_k^*g^j \to \psi^*g^j$ for each $j$. Fix $j$ and drop the superscript. Let $g$ now denote $g^j$, supported in the chart $(\mathcal{V}, x=(x^1, \dots, x^n))$. In local coordinates, $g = \sum_{a,b=1}^n g_{ab}(x) dx^a \otimes dx^b$, where $g_{ab} \in C^\infty_c(\mathcal{V})$. We have the pullback formula:
		\[ \phi_k^*g = \sum_{a,b} (g_{ab} \circ \phi_k) \phi_k^*(dx^a) \otimes \phi_k^*(dx^b), \quad \psi^*g = \sum_{a,b} (g_{ab} \circ \psi) \psi^*(dx^a) \otimes \psi^*(dx^b). \]
		This leads to the decomposition:
		\begin{align*} \phi_k^*g - \psi^*g ={}& \sum_{a,b} (g_{ab} \circ \phi_k - g_{ab} \circ \psi) \phi_k^*(dx^a) \otimes \phi_k^*(dx^b) \quad (\text{Term A}) \\ &+ \sum_{a,b} (g_{ab} \circ \psi) \left[ \phi_k^*(dx^a) \otimes \phi_k^*(dx^b) - \psi^*(dx^a) \otimes \psi^*(dx^b) \right] \quad (\text{Term B}). \end{align*}
		As argued in the proof of Lemma \ref{Key-0-current}, the assumption of uniform convergence for pullbacks of closed 1-forms implies that $\phi_k^*(dx^a) \to \psi^*(dx^a)$ uniformly on the compact set $K = \supp(g)$ (if necessary use a suitable bump function to globalize the locally defined forms). Let $\omega_k^a = \phi_k^*(dx^a)|_K$ and $\omega^a = \psi^*(dx^a)|_K$. Then $\omega_k^a \to \omega^a$ uniformly on $K$, and the sequences $\{\omega_k^a\}_k$ are uniformly bounded on $K$.	For Term A on $K$, the coefficients $(g_{ab} \circ \phi_k - g_{ab} \circ \psi) \to 0$ uniformly. The tensor product term $\omega_k^a \otimes \omega_k^b$ is uniformly bounded. Therefore, Term A converges to zero uniformly on $K$. For Term B on $K$, the coefficients $(g_{ab} \circ \psi)$ are bounded. We analyze the term in brackets:
		\begin{align*}
			\omega_k^a \otimes \omega_k^b - \omega^a \otimes \omega^b &= (\omega_k^a - \omega^a) \otimes \omega_k^b + \omega^a \otimes (\omega_k^b - \omega^b).
		\end{align*}
		Since $\omega_k^a - \omega^a \to 0$ uniformly, $\omega_k^b$ is bounded, $\omega^a$ is bounded, and $\omega_k^b - \omega^b \to 0$ uniformly (all on $K$), the norm of this bracketed term converges to zero uniformly on $K$. Thus, Term B converges to zero uniformly on $K$. Since $g = g^j$ is supported in $K$, the difference $\phi_k^*g - \psi^*g$ is zero outside $K$. The supremum norm over $M$ is achieved on $K$. Thus, 
		\[ \abs{\phi_k^*g^j - \psi^*g^j}_0 = \sup_K \abs{\phi_k^*g^j - \psi^*g^j} \le \sup_K \abs{\text{Term A}} + \sup_K \abs{\text{Term B}} \to 0. \]
		Since this holds for each $j$ in the finite sum $g = \sum_{j=1}^N g^j$, we conclude that $\abs{\phi_k^*g - \psi^*g}_0 \to 0$ uniformly on $M$.
	\end{proof}

	\subsection{Proof of Theorem \ref{TC}}

	\begin{proof}[Proof of Theorem \ref{TC} via Polyhedral Approximation]
		The proof proceeds in four steps.
		
		\textbf{Step 1: Reduction to Polyhedral Chains.}
		Let $T \in \Rectifiables{k}(M)$. By the Polyhedral Approximation Theorem (see Federer \cite{Federer69}), for any $\epsilon > 0$, there exists a polyhedral k-chain $P$ such that $\norm{T - P}_F < \epsilon$. A polyhedral chain is a finite formal sum of singular simplices, $P = \sum c_j [\sigma_j]$. We use the triangle inequality to estimate the flat norm of the difference:
		\begin{align*}
			\norm{\psi_{i*}T - \psi_*T}_F &\le \norm{\psi_{i*}T - \psi_{i*}P}_F + \norm{\psi_{i*}P - \psi_*P}_F + \norm{\psi_*P - \psi_*T}_F \\
			&= \norm{\psi_{i*}(T - P)}_F + \norm{\psi_{i*}P - \psi_*P}_F + \norm{\psi_*(T - P)}_F.
		\end{align*}
		The pushforward map $\phi_*$ is continuous with respect to the flat norm for any fixed diffeomorphism $\phi$. Specifically, $\norm{\phi_*A}_F \le C_\phi \norm{A}_F$, where $C_\phi$ depends on the Lipschitz constant of $\phi$. Since $\psi_i \xrightarrow{C^0} \psi$ and all maps are smooth on a compact manifold, the family of Lipschitz constants $\{Lip(\psi_i), Lip(\psi)\}$ is uniformly bounded for large $i$. Let this bound be $L$. Thus, the first and third terms are bounded by $L\norm{T - P}_F < L\epsilon$. The problem is now reduced to showing that for any fixed polyhedral chain $P$,
		\[ \lim_{i\to\infty} \norm{\psi_{i*}P - \psi_*P}_F = 0. \]
		
		\textbf{Step 2: Reduction to a Single Simplex.}
		By linearity of the pushforward and the flat norm's triangle inequality, it suffices to prove the result for a single k-simplex, which we denote $T_\sigma = [\sigma]$, where $\sigma: \Delta^k \to M$ is a smooth map from the standard k-simplex $\Delta^k \subset \mathbb R^{k+1}$. We need to show:
		\[ \lim_{i\to\infty} \norm{\psi_{i*}[\sigma] - \psi_*[\sigma]}_F = \lim_{i\to\infty} \norm{[\psi_i \circ \sigma] - [\psi \circ \sigma]}_F = 0. \]
		
		\textbf{Step 3: The Combinatorial Homotopy (Prism Construction).}
		Let $\sigma_i = \psi_i \circ \sigma$ and $\sigma_\psi = \psi \circ \sigma$. We construct a singular $(k+1)$-chain $H_i$ that serves as a homotopy between $\sigma_i$ and $\sigma_\psi$. This chain is a prism. Define $H_i: [0,1] \times \Delta^k \to M$ by
		\[ H_i(t, u) = \exp_{\sigma_\psi(u)}\left( t \cdot \exp_{\sigma_\psi(u)}^{-1}(\sigma_i(u)) \right),\]
		for large $i$. This is a geodesic homotopy connecting each point on the image of $\sigma_\psi$ to the corresponding point on the image of $\sigma_i$. Since $\psi_i \xrightarrow{C^0} \psi$, the distance between $\sigma_i(u)$ and $\sigma_\psi(u)$ goes to zero uniformly in $u$, so this homotopy is well-defined for large $i$. The boundary of this prism chain $[H_i]$ is given by the standard formula from singular homology:
		\[ \partial [H_i] = [\sigma_i] - [\sigma_\psi] - [\text{sides}], \]
		where `sides` denotes the chain formed by the homotopy on the boundary of $\Delta^k$. Rearranging, we get a decomposition for the flat norm: $[\sigma_i] - [\sigma_\psi] = \partial [H_i] + [\text{sides}].$  The decomposition is $T=R+\partial S$. Here, $R=[\text{sides}]$ and $S=[H_i]$. So,
		\[ \norm{[\sigma_i] - [\sigma_\psi]}_F \le \norm{[\text{sides}]}_M + \norm{[H_i]}_M. \]

	\textbf{Step 4: Mass Estimation and Use of the Main Hypothesis.}
	We must show that the mass of the prism, $\norm{[H_i]}_M$, and the mass of its boundary sides, $\norm{[\text{sides}]}_M$, both converge to zero as $i \to \infty$. The mass of the singular $(k+1)$-chain $[H_i]$ is given by the integral of its $(k+1)$-dimensional Jacobian over its domain:
	\[ \norm{[H_i]}_M = \int_{[0,1] \times \Delta^k} J_{H_i}(t,u) \, dt \, du, \]
	where $J_{H_i}$ is the Jacobian of the map $H_i$ with respect to the metric $g$ on $M$. The geometry of the prism map $H_i$ is that of a thin shell. Its "height" at any point $u \in \Delta^k$ is the geodesic distance $L_i(u) = d_g(\sigma_\psi(u), \sigma_i(u))$. Due to the $C^0$-convergence of $\psi_i$ to $\psi$, the maximum height converges to zero:
	\[ h_i := \sup_{u \in \Delta^k} L_i(u) \le \sup_{x \in \im(\sigma)} d_g(\psi(x), \psi_i(x)) \le d_{C^0}(\psi_i, \psi) \to 0. \]
	The volume of this thin prism can be bounded by its maximum height multiplied by a term related to the $k$-dimensional area of its cross-sections. To control this area term, we must control the derivatives of $\psi_i$, which requires the main hypothesis of the theorem. This is where the main hypothesis of the theorem is crucial. The assumption is that for any closed 1-form $\beta$, the sequence $\pullback{\psi_i}\beta$ converges uniformly.
	\begin{enumerate}
		\item By Corollary \ref{cor:metric_convergence}, this assumption implies that the sequence of pulled-back metric tensors $\{\psi_i^*g\}$ converges uniformly to $\psi^*g$.
		
		\item The Jacobian of the map $\sigma_i = \psi_i \circ \sigma$ at a point $u \in \Delta^k$ is computed using the metric pulled back to $\Delta^k$, which is $(\sigma_i)^*g = \sigma^*(\psi_i^*g)$. Similarly, the Jacobian of $\sigma_\psi$ depends on $\sigma^*(\psi^*g)$.
		
		\item Since $\sigma$ is a fixed smooth map on the compact domain $\Delta^k$, the uniform convergence $\psi_i^*g \to \psi^*g$ on $M$ implies the uniform convergence of the pulled-back tensors $\sigma^*(\psi_i^*g) \to \sigma^*(\psi^*g)$ on $\Delta^k$.
		
		\item The Jacobian, $J_k(\sigma_i)$, is a continuous function of the metric tensor (locally, it is the square root of the determinant of the metric components). Thus, the Jacobians converge uniformly: $J_k(\sigma_i) \to J_k(\sigma_\psi)$ for $u \in \Delta^k$.
		
		\item A uniformly convergent sequence of continuous functions on a compact set is uniformly bounded. Therefore, there exists a constant $C_\sigma > 0$ such that $J_k(\sigma_i(u)) \le C_\sigma$ for all $i$ and all $u \in \Delta^k$.
	\end{enumerate}
	
	The map $H_i(t,u)$ smoothly interpolates between $\sigma_\psi$ and $\sigma_i$. Its differential $d_u H_i$ (the differential in the $\Delta^k$ directions) will also be uniformly bounded. This means the $k$-dimensional area of any slice $H_i(t, \cdot)$ is uniformly bounded by some constant $A_{\max}$.	The $(k+1)$-dimensional volume element of the prism at $(t,u)$ is bounded by the product of the length element in the $t$ direction and the $k$-dimensional area element of the slice. The length of the velocity vector $\frac{\partial H_i}{\partial t}$ is precisely $L_i(u) \le h_i$. A rigorous bound on the Jacobian gives:
	\[ J_{H_i}(t,u) \le L_i(u) \cdot (\text{Jacobian of the slice map } u \mapsto H_i(t,u)) \le h_i \cdot C' \]
	for some uniform constant $C'$. Therefore, we can bound the mass:
	\[ \norm{[H_i]}_M = \int_{\Delta^k} \int_0^1 J_{H_i}(t,u) \, dt \, du \le \int_{\Delta^k} \int_0^1 (h_i \cdot C') \, dt \, du = h_i \cdot C' \cdot \Vol(\Delta^k). \]
	Since $h_i \to 0$, we conclude that $\lim_{i\to\infty} \norm{[H_i]}_M = 0$. The argument for the sides is identical. The chain $[\text{sides}]$ is a prism built over the $(k-1)$-dimensional boundary $\partial\sigma$. Its mass is a $k$-dimensional volume. The "height" of this side-prism is still bounded by $h_i$, and its "base" is the $(k-1)$-dimensional manifold $\psi(\partial(\im\sigma))$, which has finite, bounded area. By the same reasoning, its mass is also of order $O(h_i)$, so $\lim_{i\to\infty} \norm{[\text{sides}]}_M = 0$. Since both mass terms on the right-hand side of the flat norm estimate vanish as $i \to \infty$, we have shown $\norm{[\psi_i \circ \sigma] - [\psi \circ \sigma]}_F \to 0$. By the reduction arguments of the previous steps, this completes the proof.
	\end{proof}
	
	\begin{lemma}[Uniform Boundedness of Differentials]
		\label{lem:bounded_diff}
		Let $(M,g)$ be a compact Riemannian manifold. Let $\{\psi_i\}_{i=1}^\infty \subseteq \Diff(M)$ be a sequence of diffeomorphisms converging in $C^0$ to a diffeomorphism $\psi$. If the sequence of pulled-back metric tensors $\{\psi_i^*g\}$ converges uniformly to $\psi^*g$, then the sequence of operator norms of the differentials, $\{\norm{d\psi_i}_{C^0}\}_{i=1}^\infty$, is uniformly bounded.
	\end{lemma}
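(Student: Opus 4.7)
The plan is to translate the operator norm of $d\psi_i$ into a pointwise quantity involving the pulled-back metric $\psi_i^*g$, and then exploit the hypothesized uniform convergence $\psi_i^*g \to \psi^*g$ to obtain a uniform bound.

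First, I would observe the basic identity: for any $x \in M$ and $v \in T_xM$,
\[ (\psi_i^*g)_x(v,v) = g_{\psi_i(x)}(d\psi_i(v), d\psi_i(v)) = \norm{d\psi_i(v)}_g^2. \]
This gives the pointwise characterization
\[ \norm{d\psi_i(x)}_{op}^2 = \sup_{v \in T_xM,\, \norm{v}_g = 1} (\psi_i^*g)_x(v,v), \]
which is precisely the largest eigenvalue of the symmetric $(0,2)$-tensor $\psi_i^*g$ measured against $g$ at $x$. In particular, with the supremum norm defined in Corollary \ref{cor:metric_convergence}, one has $\norm{d\psi_i(x)}_{op}^2 \le \abs{\psi_i^*g}_0$ (indeed, equality of suprema), since the supremum norm for a symmetric $(0,2)$-tensor dominates its largest eigenvalue at every point.

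Next, I would use the triangle inequality for $\abs{\cdot}_0$ together with the hypothesis of uniform convergence $\psi_i^*g \to \psi^*g$ to write
\[ \abs{\psi_i^*g}_0 \le \abs{\psi_i^*g - \psi^*g}_0 + \abs{\psi^*g}_0. \]
The first term tends to zero by assumption, so it is bounded by some constant $C_1$ for all sufficiently large $i$; the second term is a fixed finite number $C_2$ since $\psi^*g$ is a continuous tensor on the compact manifold $M$. Combining with the previous paragraph,
\[ \sup_{x \in M} \norm{d\psi_i(x)}_{op} \le \sqrt{C_1 + C_2} \]
for all large $i$. Absorbing the finitely many early terms by enlarging the constant (each $d\psi_i$ is already bounded on the compact $M$ because $\psi_i$ is smooth) yields a single uniform bound $\sup_i \norm{d\psi_i}_{C^0} < \infty$, which is the desired conclusion.

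There is no real obstacle here beyond the bookkeeping of identifying the operator norm of the differential with the relevant quantity attached to the pulled-back metric. The only subtlety to watch out for is the precise definition of $\abs{\cdot}_0$ for $(0,2)$-tensors used in Corollary \ref{cor:metric_convergence}: as stated it takes the supremum over pairs of unit vectors, which equals the operator norm of the associated symmetric endomorphism and in particular dominates its largest eigenvalue, so the inequality $\norm{d\psi_i(x)}_{op}^2 \le \abs{\psi_i^*g}_0$ used above is valid without additional work.
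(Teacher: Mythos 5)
Your proof is correct and rests on exactly the same ingredients as the paper's: the identity $(\psi_i^*g)_x(v,v)=\norm{(d\psi_i)_x v}_g^2$, the resulting bound $\norm{d\psi_i}_{C^0}^2\le\abs{\psi_i^*g}_0$, and the fact that a uniformly convergent sequence of tensors is bounded in the sup norm. The paper merely packages this as a proof by contradiction (assuming $\norm{d\psi_i}_{C^0}\to\infty$ and deriving unboundedness of $\norm{\psi_i^*g}_{C^0}$), whereas your direct version is the same argument stated affirmatively.
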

	
	\begin{proof}
		We proceed by contradiction. The operator norm of the differential of $\psi_i$ is defined as
		\[ \norm{d\psi_i}_{C^0} = \sup_{x \in M} \norm{(d\psi_i)_x}_{op} = \sup_{x \in M} \left( \sup_{v \in T_xM, \norm{v}_g=1} \norm{(d\psi_i)_x v}_g \right). \]
		By the definition of the pullback metric, $(\psi_i^*g)_x(v,v) = g_{\psi_i(x)}((d\psi_i)_x v, (d\psi_i)_x v) = \norm{(d\psi_i)_x v}_g^2$. This allows us to express the operator norm of the differential in terms of the pullback metric:
		\[ \norm{d\psi_i}_{C^0} = \sup_{x \in M} \left( \sup_{v \in T_xM, \norm{v}_g=1} \sqrt{(\psi_i^*g)_x(v,v)} \right). \]
	Assume, for the sake of contradiction, that the sequence of norms $\{\norm{d\psi_i}_{C^0}\}$ is not uniformly bounded. This implies the existence of a subsequence (which we re-index by $i$) such that
	$ \lim_{i\to\infty} \norm{d\psi_i}_{C^0} = \infty.$ 	The norm $\norm{d\psi_i}_{C^0}$ is the maximum value of the continuous function $(x,v) \mapsto \norm{(d\psi_i)_x v}_g$ on the compact unit tangent bundle $SM$. By the Extreme Value Theorem, this maximum is attained. Thus, for each $i$ in this subsequence, we can find a point $x_i \in M$ and a unit vector $v_i \in T_{x_i}M$ where this maximum is achieved: $\norm{(d\psi_i)_{x_i} v_i}_g = \norm{d\psi_i}_{C^0}.$ 
	Squaring this and using the identity $(\psi_i^*g)_x(v,v) = \norm{(d\psi_i)_x v}_g^2$, we get
	$(\psi_i^*g)_{x_i}(v_i, v_i) = \left(\norm{d\psi_i}_{C^0}\right)^2.$  The pointwise operator norm of the tensor $(\psi_i^*g)_x$ is given by $\norm{(\psi_i^*g)_x}_{op} = \sup_{\norm{v}_g=1} (\psi_i^*g)_x(v,v)$, since $\psi_i^*g$ is a positive-definite symmetric tensor. Therefore,
		\[ \norm{(\psi_i^*g)_{x_i}}_{op} \ge (\psi_i^*g)_{x_i}(v_i, v_i) \ge \frac{1}{4} \left(\norm{d\psi_i}_{C^0}\right)^2. \]
		The $C^0$-norm of the tensor $\psi_i^*g$ is the supremum of its pointwise operator norms over $M$:
		\[ \norm{\psi_i^*g}_{C^0} = \sup_{x \in M} \norm{(\psi_i^*g)_x}_{op} \ge \norm{(\psi_i^*g)_{x_i}}_{op}. \]
		Combining the inequalities, we find $\norm{\psi_i^*g}_{C^0} \ge \frac{1}{4} \left(\norm{d\psi_i}_{C^0}\right)^2.$ 
		Since we assumed that $\norm{d\psi_i}_{C^0} \to \infty$ for our subsequence, this implies that the sequence of tensor norms $\{\norm{\psi_i^*g}_{C^0}\}$ must also diverge to infinity. However, the premise of the lemma is that the sequence of tensors $\{\psi_i^*g\}$ converges uniformly to $\psi^*g$. A fundamental result of analysis is that any convergent sequence in a normed space is bounded. Thus, there must exist a constant $C > 0$ such that $\norm{\psi_i^*g}_{C^0} \le C$ for all $i$. This is a direct contradiction to our finding that the norms must be unbounded. The assumption that $\{\norm{d\psi_i}_{C^0}\}$ is unbounded must be false. Therefore, the sequence is uniformly bounded.
	\end{proof}

	\begin{proof}[Proof of Theorem \ref{TC} via Functional Duality]
		The proof proceeds in four steps. It relies on the main assumption in a more subtle, but equally critical, manner than the first proof.
		
		\textbf{Step 1: The Dual Characterization of the Flat Norm.}
		The flat norm of a current $A \in \Currents{k}(M)$ has a dual characterization, established by Federer and Fleming. For rectifiable currents, it is given by:
		\[ \norm{A}_F = \sup \left\{ A(\omega) : \omega \in \Forms{k}(M), \norm{\omega}_{C^0} \le 1, \text{ and } \norm{d\omega}_{C^0} \le 1 \right\}. \]
		Let $\mathcal{K} = \{ \omega \in \Forms{k}(M) : \norm{\omega}_{C^0} \le 1, \norm{d\omega}_{C^0} \le 1 \}$ be this set of test forms. To prove the theorem, we must show that the norm of the difference current converges to zero:
		\[ \lim_{i\to\infty} \norm{\psi_{i*}T - \psi_*T}_F = \lim_{i\to\infty} \sup_{\omega \in \mathcal{K}} \left| (\psi_{i*}T - \psi_*T)(\omega) \right| = 0. \]
		
		\textbf{Step 2: Recasting the Problem via Duality.}
		Using the definition of the pushforward, $(\phi_*T)(\omega) = T(\phi^*\omega)$, our goal becomes showing:
		\[ \lim_{i\to\infty} \sup_{\omega \in \mathcal{K}} \left| T(\psi_i^*\omega) - T(\psi^*\omega) \right| = \lim_{i\to\infty} \sup_{\omega \in \mathcal{K}} \left| T(\psi_i^*\omega - \psi^*\omega) \right| = 0. \]
		Since $T$ is a rectifiable current, it is a current of order 0. This means its action on forms is continuous with respect to the $C^0$-norm. That is, there exists a constant equal to the mass of the current, $\norm{T}_M$, such that $|T(\alpha)| \le \norm{T}_M \norm{\alpha}_{C^0}$ for any continuous form $\alpha$. Therefore,
		\[ \left| T(\psi_i^*\omega - \psi^*\omega) \right| \le \norm{T}_M \norm{\psi_i^*\omega - \psi^*\omega}_{C^0}. \]
		The proof now reduces to establishing the following key analytical statement: the pullback $\psi_i^*\omega$ converges to $\psi^*\omega$ in the $C^0$-norm, and this convergence is \emph{uniform} over the entire set of test forms $\mathcal{K}$. That is, we must prove:
		\[ \lim_{i\to\infty} \sup_{\omega \in \mathcal{K}} \norm{\psi_i^*\omega - \psi^*\omega}_{C^0} = 0. \]
		
		\textbf{Step 3: Precompactness of the Test Forms.}
		The set of test forms $\mathcal{K}$ is precompact in the space of continuous $k$-forms equipped with the $C^0$-norm. We verify this using the Arzela-Ascoli theorem on the compact manifold $M$.
		\begin{enumerate}
			\item \textbf{Uniform Boundedness:} By definition, for any $\omega \in \mathcal{K}$, we have $\norm{\omega}_{C^0} = \sup_{x \in M} |\omega_x| \le 1$. The set $\mathcal{K}$ is uniformly bounded.
			\item \textbf{Equicontinuity:} The condition $\norm{d\omega}_{C^0} \le 1$ provides a uniform bound on the first derivatives of the components of $\omega$ in any local coordinate chart. For any two points $x, y \in M$ connected by a minimizing geodesic $\gamma$, the Mean Value Theorem implies $|\omega_x - \omega_y| \le (\sup_M | \nabla \omega |) \cdot d_g(x,y)$. The norm of the covariant derivative $|\nabla \omega|$ is controlled by the norm of $d\omega$. Thus, the condition $\norm{d\omega}_{C^0} \le 1$ ensures that the family of component functions of forms in $\mathcal{K}$ is equicontinuous.
		\end{enumerate}
		By the Arzela-Ascoli theorem, the closure of $\mathcal{K}$ in the $C^0$ topology is compact.

	\textbf{Step 4: The Uniform Convergence Argument and Use of the Main Hypothesis.}
	We now prove the uniform convergence by contradiction. Assume the convergence is not uniform over $\mathcal{K}$. Then there exists an $\epsilon > 0$, a subsequence (which we re-index by $i$), and a sequence of forms $\{\omega_i\}_{i=1}^\infty \subset \mathcal{K}$ such that for all $i$:
	\begin{equation} \label{eq:contradiction_proof2}
		\norm{\psi_i^*\omega_i - \psi^*\omega_i}_{C^0} \ge \epsilon.
	\end{equation}
	Since $\{\omega_i\} \subset \mathcal{K}$ and $\mathcal{K}$ is precompact, there is a subsequence, which we still denote by $\{\omega_i\}$, that converges in the $C^0$-norm to a continuous $k$-form $\omega_\infty$.	We analyze the term on the left of \eqref{eq:contradiction_proof2} using the triangle inequality:
	\begin{align*}
		\norm{\psi_i^*\omega_i - \psi^*\omega_i}_{C^0} &\le \norm{\psi_i^*(\omega_i - \omega_\infty)}_{C^0} + \norm{\psi_i^*\omega_\infty - \psi^*\omega_\infty}_{C^0} + \norm{\psi^*(\omega_i - \omega_\infty)}_{C^0}. \quad (*)\end{align*}
	
	To show that the right-hand side converges to zero, we must control the derivatives of $\psi_i$. The $C^0$-convergence of maps is insufficient. Here we must use the full power of the main hypothesis and its consequences. The theorem assumes that for any closed 1-form $\beta$, $\pullback{\psi_i}\beta$ converges uniformly. This single assumption provides two crucial pieces of information:
	
	\begin{itemize}
		\item \textbf{(A) Uniform Boundedness of Derivatives:} By Corollary \ref{cor:metric_convergence}, the hypothesis implies the uniform convergence of the pulled-back metric tensors: $\psi_i^*g \xrightarrow{C^0} \psi^*g$. As a sequence of continuous functions on a compact set, $\{\norm{\psi_i^*g}_{C^0}\}$ is bounded. The norm of the differential $(d\psi_i)_x$ is related by $(\psi_i^*g)_x(v,v) = g_{\psi_i(x)}((d\psi_i)_x v, (d\psi_i)_x v)$. This relationship ensures that the sequence of operator norms of the differentials, $\{\norm{d\psi_i}_{C^0}\}_{i=1}^\infty$, must be uniformly bounded (Lemma \ref{lem:bounded_diff}). Let $L$ be a constant such that $\norm{d\psi_i}_{C^0} \le L$ for all $i$.
		
		\item \textbf{(B) Uniform Convergence of Derivatives:} The hypothesis itself is stronger than what is needed for the corollary. Applying the hypothesis to a local basis of closed 1-forms in a coordinate chart, such as $\{dx^j\}$, shows that the pullbacks $\psi_i^*(dx^j) = d(\psi_i^j)$ must converge uniformly (If necessary use a bump function for globalization). The components of the 1-form $d(\psi_i^j)$ are precisely the entries of the $j$-th row of the Jacobian matrix of $\psi_i$. Uniform convergence of these forms therefore implies uniform convergence of the components of the Jacobian matrix. Thus, the derivatives converge uniformly: $d\psi_i \to d\psi$ uniformly on $M$.
	\end{itemize}
	
	With these two facts, we can analyze each term in the sum $(*)$:
	\begin{enumerate}
		\item \textbf{First Term:} The pullback operator norm is controlled by the differential. Using Fact (A):
		\[ \norm{\psi_i^*(\omega_i - \omega_\infty)}_{C^0} \le C_k \norm{d\psi_i}_{C^0}^k \norm{\omega_i - \omega_\infty}_{C^0} \le C_k L^k \norm{\omega_i - \omega_\infty}_{C^0}. \]
		Since $\omega_i \to \omega_\infty$ in $C^0$, this term converges to 0.
		
		\item \textbf{Third Term:} The map $\psi$ is a fixed smooth diffeomorphism, so its differential is bounded. The argument is identical:
		\[ \norm{\psi^*(\omega_i - \omega_\infty)}_{C^0} \le C_k \norm{d\psi}_{C^0}^k \norm{\omega_i - \omega_\infty}_{C^0} \to 0. \]
		
		\item \textbf{Second Term:} We need to show $\norm{\psi_i^*\omega_\infty - \psi^*\omega_\infty}_{C^0} \to 0$. The form $\omega_\infty$ is fixed. At any point $x \in M$:
		\[ (\psi_i^*\omega_\infty)_x - (\psi^*\omega_\infty)_x = (\omega_\infty)_{\psi_i(x)} \circ \Lambda^k(d\psi_i)_x - (\omega_\infty)_{\psi(x)} \circ \Lambda^k(d\psi)_x. \]
		This difference goes to zero uniformly for three reasons:
		(i) $\psi_i(x) \to \psi(x)$ uniformly (given);
		(ii) $\omega_\infty$ is uniformly continuous on the compact manifold $M$;
		(iii) $d\psi_i \to d\psi$ uniformly on $M$ (by Fact (B)). This implies $\Lambda^k(d\psi_i)_x \to \Lambda^k(d\psi)_x$ uniformly.
	\end{enumerate}
	All three terms on the right-hand side of $(*)$ converge to 0 as $i \to \infty$. This implies that $\lim_{i\to\infty} \norm{\psi_i^*\omega_i - \psi^*\omega_i}_{C^0} = 0$, which contradicts our assumption \eqref{eq:contradiction_proof2}. Therefore, the initial assumption of non-uniform convergence must be false. The convergence $\norm{\psi_i^*\omega - \psi^*\omega}_{C^0} \to 0$ is indeed uniform over the set $\mathcal{K}$, completing the proof.
	\end{proof}

	\begin{remark}
		The compactness assumption in our results is crucial. We explore this with a sequence of examples.
		
		\begin{itemize}
			\item \textbf{Compact Manifold ($C^0$ vs $C^1$ convergence):}
			Let $M = [0, 1]$ be the unit interval. Define a sequence of smooth maps $\phi_k(x) = x + \frac{1}{k} \sin(kx)$. As $k \to \infty$, $\phi_k(x)$ converges uniformly to the identity map $\psi(x) = x$, since
			$
			\lim_{k \to \infty} \sup_{x \in [0,1]} |\phi_k(x) - \psi(x)| = \lim_{k \to \infty} \sup_{x \in [0,1]} \left|\frac{1}{k} \sin(kx)\right| = 0.
			$
			However, the derivatives do not converge uniformly. We have $\phi_k'(x) = 1 + \cos(kx)$ and $\psi'(x) = 1$, so
			$$
			\sup_{x \in [0,1]} |\phi_k'(x) - \psi'(x)| = \sup_{x \in [0,1]} |\cos(kx)| = 1 \nrightarrow 0.
			$$ 
			This shows that $C^0$-convergence does not imply $C^1$-convergence. (Note that for $k \ge 1$, $\phi_k$ is not a diffeomorphism of $[0,1]$ to itself).
			
			\item \textbf{Uniform Convergence of Pullbacks (Lemma \ref{Key-current}):}
			Let us consider the same sequence $\phi_k(x) = x + \frac{1}{k}\sin(kx)$ and the 1-form $\alpha = dx$. The pullback is
			$
			\phi_k^* \alpha = d(\phi_k(x)) = \phi_k'(x)dx = (1 + \cos(kx))dx.
			$
			This sequence of forms does not converge uniformly to $\psi^*\alpha = dx$. This highlights why the uniform convergence of pullbacks is a strong assumption in Lemma \ref{Key-current}, not a consequence of $C^0$-convergence of the maps alone. The lemma states that if this uniform convergence happens, the limit must be $\psi^*\alpha$.
			
			\item \textbf{Weak Convergence of Measures:}
			Let $M = [0, 1]$ with a measure $\mu$ given by the density $f(x) = 2x$ with respect to Lebesgue measure. Consider the same maps $\phi_k(x) = x + \frac{1}{k} \sin(kx)$ and a test function $g(x) = x$. The pushforward measure $(\phi_k)_*\mu$ acting on $g$ is $\int_0^1 g(\phi_k(x)) d\mu(x)$. We evaluate the integral:
			$$
			I_k = \int_0^1 g(\phi_k(x)) f(x) \, dx = \int_0^1 \left(x + \frac{1}{k} \sin(kx)\right) (2x) \, dx = 2 \left[ \int_0^1 x^2 \, dx + \frac{1}{k} \int_0^1 x \sin(kx) \, dx \right].
			$$
			Using integration by parts, $\int_0^1 x \sin(kx) \, dx = \frac{\sin(k) - k\cos(k)}{k^2}$. Thus,
			$$
			I_k = 2\left[\frac{1}{3} + \frac{1}{k} \left(\frac{\sin(k) - k \cos(k)}{k^2}\right) \right] = \frac{2}{3} + \frac{2(\sin(k) - k \cos(k))}{k^3}.
			$$
			As $k \to \infty$, since $|\sin(k)| \leq 1$ and $|\cos(k)| \leq 1$, the second term vanishes:
			$
			\lim_{k \to \infty} I_k = \frac{2}{3}.
			$
			This matches the integral with respect to the limit map $\psi(x)=x$:
			$
			\int_0^1 g(\psi(x)) f(x) \, dx = \int_0^1 x(2x) \, dx = \frac{2}{3}.
			$
			This demonstrates the weak-* convergence of the pushforward measures for 0-forms (functions), which is a standard result.
			
			\item \textbf{Non-compactness:} Let $M = \mathbb{R}$. Consider the sequence $\phi_k(x) = x + \frac{1}{k} x^2$. For any fixed $x$, $\phi_k(x) \to x$ as $k \to \infty$. However, the convergence is not uniform on $\mathbb{R}$:
			$$
			\sup_{x \in \mathbb{R}}|\phi_k(x)-x| = \sup_{x \in \mathbb{R}} \left|\frac{1}{k}x^2\right| = \infty.
			$$ 
			Let us check the pullback of the 1-form $\alpha = dx$. We have, 
			$
			\phi_k^* \alpha = d\left(x + \frac{1}{k} x^2\right) = \left(1 + \frac{2}{k}x\right) dx.
			$
			The difference $\phi_k^* \alpha - \alpha = \frac{2}{k}x \, dx$ does not converge to zero uniformly on $\mathbb{R}$, as its supremum norm is infinite for any $k$. This shows that the compactness of $M$ is a necessary condition for the uniform convergence results.
			
		\end{itemize}
	\end{remark}
	\begin{lemma}[Conditional $C^0$-Continuity of the Pullback Map]
		\label{lem:C0_continuity_conditional}
		Let $(M, g)$ be a smooth, oriented, compact Riemannian manifold, and let $\alpha \in \Omega^k(M)$ be a fixed smooth $k$-form. Let $\{\phi_k\}_{k=1}^\infty$ be a sequence in $\mathrm{Diff}^\infty(M)$ converging to a diffeomorphism $\psi$ in the $C^0$-metric. If the sequence of differentials $\{d\phi_k\}$ additionally satisfies:
		\begin{enumerate}
			\item[(i)] The operator norms are uniformly bounded: there exists $L>0$ such that $\norm{d\phi_k}_{C^0} \le L$ for all $k$.
			\item[(ii)] The differentials converge pointwise: for every $x \in M$, $\lim_{k\to\infty} (d\phi_k)_x = (d\psi)_x$.
		\end{enumerate}
		Then the sequence of pulled-back forms converges in the $L^2$-norm: $\lim_{k\to\infty} \norm{\phi_k^*\alpha - \psi^*\alpha}_2 = 0$.
	\end{lemma}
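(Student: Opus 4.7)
The plan is to establish the $L^2$-convergence via the Lebesgue Dominated Convergence Theorem, which neatly combines the pointwise convergence guaranteed by hypothesis (ii) with the uniform bound guaranteed by hypothesis (i). Compactness of $M$ ensures finite total volume, which is what makes the dominated convergence step work.

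First, I would establish pointwise convergence of the pullback forms. At a point $x \in M$, the pullback is given by the formula
\[ (\phi_k^*\alpha)_x(v_1, \dots, v_k) = \alpha_{\phi_k(x)}\bigl((d\phi_k)_x v_1, \dots, (d\phi_k)_x v_k\bigr). \]
Since $\phi_k(x) \to \psi(x)$ and $\alpha$ is smooth (hence continuous) on the compact manifold $M$, we have $\alpha_{\phi_k(x)} \to \alpha_{\psi(x)}$. Combined with the pointwise convergence $(d\phi_k)_x \to (d\psi)_x$ from hypothesis (ii) and the multilinearity/continuity of $\alpha$ in its tangent-vector arguments, this yields pointwise convergence $(\phi_k^*\alpha)_x \to (\psi^*\alpha)_x$ as tensors, and therefore convergence of the pointwise $g$-norms: $|\phi_k^*\alpha - \psi^*\alpha|_x^2 \to 0$ for every $x \in M$.

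Next, I would produce a uniform pointwise dominating function. The operator-norm bound in hypothesis (i) gives $\norm{\Lambda^k(d\phi_k)_x}_{op} \le \norm{(d\phi_k)_x}_{op}^k \le L^k$. Setting $A \ldef \sup_{y \in M} |\alpha_y|$, which is finite since $\alpha$ is smooth on compact $M$, we obtain the uniform pointwise estimate $|\phi_k^*\alpha|_x \le A \cdot L^k$. Writing $B \ldef \sup_{y \in M} |\psi^*\alpha|_y < \infty$ (also finite since $\psi \in \Diff(M)$ is a fixed smooth map on compact $M$), the triangle inequality produces the uniform bound
\[ |\phi_k^*\alpha - \psi^*\alpha|_x^2 \le \bigl(A L^k + B\bigr)^2 \rdef K, \]
where $K$ is a constant independent of both $k$ and $x$.

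Finally, the constant function $K$ is integrable on $M$ with respect to $d\Vol_g$ since $M$ has finite total volume. The Lebesgue Dominated Convergence Theorem therefore applies to the integrands $|\phi_k^*\alpha - \psi^*\alpha|_x^2$, which converge pointwise to $0$ and are dominated by $K$, yielding
\[ \lim_{k\to\infty} \norm{\phi_k^*\alpha - \psi^*\alpha}_2^2 = \lim_{k\to\infty} \int_M |\phi_k^*\alpha - \psi^*\alpha|_x^2 \, d\Vol_g = 0. \]
The only subtle point, and the one that makes the statement truly conditional, is the necessity of hypothesis (i): without a uniform bound on $\norm{d\phi_k}_{C^0}$, the factor $L^k$ would instead be an unbounded sequence, and the dominating function would fail to exist. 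Hypothesis (ii) alone controls the limiting behavior, while hypothesis (i) alone controls the tail behavior; the interplay of the two is what closes the argument.
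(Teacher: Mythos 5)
Your proposal is correct and follows essentially the same route as the paper's proof: pointwise convergence of $(\phi_k^*\alpha)_x$ from the $C^0$-convergence of the maps plus hypothesis (ii), a constant dominating function of the form $(A L^k + B)^2$ from hypothesis (i) and compactness, and then the Dominated Convergence Theorem to pass the limit inside the $L^2$ integral. Nothing is missing; the argument matches the paper's step for step.
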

	
	\begin{proof}
		We aim to show that $\lim_{k\to\infty} \int_M |\phi_k^*\alpha(x) - \psi^*\alpha(x)|_g^2 \, d\mathrm{Vol}_g = 0$.
		Let us define a sequence of non-negative functions $f_k: M \to \mathbb{R}$ by $f_k(x) = |\phi_k^*\alpha(x) - \psi^*\alpha(x)|_g^2.$ First, we establish the pointwise convergence of this sequence. At any fixed point $x \in M$, the pullback is given by $(\phi_k^*\alpha)_x = (\alpha)_{\phi_k(x)} \circ \Lambda^k((d\phi_k)_x)$. Since $\phi_k \to \psi$ in $C^0$, we have $\phi_k(x) \to \psi(x)$. By assumption (ii), we have $(d\phi_k)_x \to (d\psi)_x$. Because $\alpha$ is a smooth form and the exterior power operation is continuous, we get pointwise convergence of the forms:
		$\lim_{k\to\infty} (\phi_k^*\alpha)_x = (\alpha)_{\psi(x)} \circ \Lambda^k((d\psi)_x) = (\psi^*\alpha)_x.$ 
		Therefore, the sequence of functions $f_k(x)$ converges pointwise to $0$ for all $x \in M$.	Next, to justify interchanging the limit and the integral, we use the Lebesgue Dominated Convergence Theorem. We must find an integrable function $G(x)$ on $M$ such that $f_k(x) \le G(x)$ for all $k$. We have:
	$$
			f_k(x) = |\phi_k^*\alpha(x) - \psi^*\alpha(x)|_g^2 
			\le \left( |\phi_k^*\alpha(x)|_g + |\psi^*\alpha(x)|_g \right)^2.$$ 
		The norm of a pullback form is bounded by the norm of the differential. That is, there exists a constant $C_k'$ (depending on the degree $k$) such that $|\phi_k^*\alpha(x)|_g \le C_k' \norm{\alpha}_{C^0} \norm{d\phi_k}_{C^0}^k$. Using assumption (i), we have a uniform bound independent of $k$: $|\phi_k^*\alpha(x)|_g \le C_k' \norm{\alpha}_{C^0} L^k.$ 
		The term $|\psi^*\alpha(x)|_g$ is also bounded on the compact manifold $M$ by a constant, say $C_\psi = \norm{\psi^*\alpha}_{C^0}$.
		Thus, the functions $f_k(x)$ are uniformly bounded by a single constant: $f_k(x) \le \left( C_k' \norm{\alpha}_{C^0} L^k + C_\psi \right)^2 =: G_{const}.$ 
		The constant function $G(x) = G_{const}$ is integrable on the compact manifold $M$. By the Dominated Convergence Theorem, we can exchange the limit and the integral:
		\[ \lim_{k\to\infty} \norm{\phi_k^*\alpha - \psi^*\alpha}_2^2 = \lim_{k\to\infty} \int_M f_k(x) \, d\mathrm{Vol}_g = \int_M \left(\lim_{k\to\infty} f_k(x)\right) \, d\mathrm{Vol}_g = \int_M 0 \, d\mathrm{Vol}_g = 0. \]
		This completes the proof.
	\end{proof}
	\section{Applications}\label{sec:applications}

	Here are corollaries of our main results.

	\begin{theorem}\label{Key-1-c} ($C^0$-rigidity of Symplectomorphisms)
		Let $(M, \omega)$ be a compact connected symplectic manifold. Then,  $\overline{\Symp(M, \omega)}^{C^0} \cap \Diff(M) = \Symp(M, \omega).$ 
	\end{theorem}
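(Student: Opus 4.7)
The forward inclusion $\Symp(M,\omega) \subseteq \overline{\Symp(M,\omega)}^{C^0} \cap \Diff(M)$ is immediate, since every symplectomorphism is a diffeomorphism and is trivially the $C^0$-limit of the constant sequence consisting of itself. The plan is therefore to establish the reverse inclusion. Fix an arbitrary $\psi \in \overline{\Symp(M,\omega)}^{C^0} \cap \Diff(M)$ and pick a sequence $\{\psi_i\} \subseteq \Symp(M,\omega)$ with $\psi_i \xrightarrow{C^0} \psi$. The task reduces to verifying the single identity $\psi^\ast \omega = \omega$.

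The key observation is that Lemma \ref{Key-current} Part 1 applies to the symplectic form in a nearly trivial way. Since each $\psi_i$ is a symplectomorphism, $\psi_i^\ast \omega = \omega$ for every $i$, so the sequence $\{\psi_i^\ast \omega\}$ is the constant sequence equal to $\omega$ and \emph{a fortiori} converges uniformly in $\Omk{2}$ to $\omega$. Applying Lemma \ref{Key-current} Part 1 with $\phi_k = \psi_k$, $\alpha = \omega$, and limit form $\beta = \omega$ then yields $\omega = \psi^\ast \omega$. Because $\psi \in \Diff(M)$, the pulled-back form $\psi^\ast \omega$ is automatically a smooth, closed, non-degenerate $2$-form, so the equality $\psi^\ast \omega = \omega$ is the full symplectic condition and $\psi \in \Symp(M,\omega)$.

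In effect, the entire analytical difficulty has been absorbed upstream into Lemma \ref{Key-current}, whose proof in turn rests on the Hausdorff convergence of images of rectifiable currents under $C^0$-limits of diffeomorphisms. Once that lemma is granted, there is no substantive obstacle at the level of the present application: the classical Eliashberg--Gromov-type $C^0$-rigidity reduces to a direct invocation of Part 1 of the lemma with $\alpha = \omega$. Compactness of $M$ is used implicitly, both to legitimate the lemma and to ensure that $d_{C^0}(\psi_i, \psi) \to 0$ is genuinely uniform convergence; connectedness plays no role in the argument. The only structural requirement beyond the lemma is that the limit $\psi$ itself lie in $\Diff(M)$, which is exactly what is enforced by intersecting the $C^0$-closure with $\Diff(M)$ in the statement, and without which $\psi^\ast \omega$ would not even be a well-defined smooth $2$-form.
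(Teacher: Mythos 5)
Your proof is correct, but it takes a different route from the paper's. You apply Lemma \ref{Key-current}, Part 1, directly with $\alpha = \omega$: since each $\psi_i^\ast\omega = \omega$, the sequence of pullbacks is constant and hence converges uniformly, so Part 1 identifies the limit as $\psi^\ast\omega$, giving $\psi^\ast\omega = \omega$ at once. The paper instead routes the argument through Part 2 of the same lemma: it takes an arbitrary rectifiable $2$-current $T$, invokes the weak-* convergence $(\psi_i)_\ast T(\omega) \to \psi_\ast T(\omega)$, deduces $T(\omega - \psi^\ast\omega) = 0$ for every such $T$, and then concludes $\omega - \psi^\ast\omega = 0$ by a separation argument (testing against small disc currents). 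Your version is shorter and, in one respect, cleaner: Part 2 as stated hypothesizes uniform convergence of $\pullback{\psi_i}\eta$ for \emph{every} smooth $2$-form $\eta$, a hypothesis that is only verified in the application for the single constant sequence $\pullback{\psi_i}\omega = \omega$, whereas Part 1 needs exactly and only that constant sequence. In fact the paper itself uses your strategy verbatim for the contact and volume-preserving cases (Theorems \ref{Key-3-c} and \ref{Key-4-c}). What the paper's detour buys is an illustration of the current-theoretic machinery that is the theme of the article -- rigidity read off from the stability of pushforwards of rectifiable currents -- at the price of a heavier hypothesis and an extra duality step. Your closing remarks on the roles of compactness and of intersecting with $\Diff(M)$ are accurate; connectedness is indeed not used in either argument.
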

	\begin{proof}
		Let $\psi\in \overline{\Symp(M, \omega)}^{C^0}\cap \Diff(M)$. Then there exists a sequence $\{\psi_i\}\subseteq \Symp(M, \omega)$ such that $\psi_i\xrightarrow{C^0}\psi$. Let $T$ be an arbitrary rectifiable 2-current. Since rectifiable currents are order 0 currents, we can apply Lemma \ref{Key-current}(2) to get weak-* convergence of the pushforwards: $\lim_{i\to\infty} (\psi_i)_\ast T(\omega) = \psi_\ast T(\omega).$ 
		By definition, this is $\lim_{i\to\infty} T(\psi_i^*\omega) = T(\psi^*\omega)$.
		Since each $\psi_i$ is a symplectomorphism, $\psi_i^*\omega = \omega$ for all $i$. So, the limit becomes $T(\omega) = T(\psi^*\omega)$, which implies $T(\omega - \psi^*\omega) = 0$.
		Since this holds for all rectifiable 2-currents $T$, the 2-form $\omega - \psi^*\omega$ must be zero. (If it were non-zero at a point, one could construct a small 2-disc current $T$ for which the action would be non-zero). Thus, $\psi^*\omega = \omega$, and $\psi \in \Symp(M, \omega)$.
	\end{proof}
	
	\begin{remark}
		This result, originally due to Eliashberg \cite{Eliashberg81, Eliashberg87}, shows that the group of smooth symplectomorphisms is closed under $C^0$ limits within the space of smooth diffeomorphisms.
	\end{remark}

	\subsection*{Example in symplectic geometry}
	\label{sec:symplectic_example}
	\noindent
	Consider the standard symplectic torus $(\mathbb T^2, \omega = dx \wedge dy)$. A symplectomorphism is a linear transformation with determinant 1, like $\psi(x, y) = (x + y, y)$. Now consider a sequence of diffeomorphisms that converge to $\psi$, but are not themselves symplectomorphisms. For $i \in \mathbb{N}$, let
	$$
	\psi_i(x,y) = \left(x+y - \frac{1}{i}\sin(ix), y\right).
	$$
	Since $\sup |\frac{1}{i}\sin(ix)| = \frac{1}{i} \to 0$, we have $\psi_i \xrightarrow{C^0} \psi$. Let $T = [\mathbb{T}^2]$ be the current of integration over the torus. We expect $(\psi_i)_\ast T(\omega) \to \psi_\ast T(\omega)$. First, we compute the pullback $\psi_i^\ast\omega$:
	$$	\psi_i^\ast\omega = d\left(x+y - \frac{1}{i}\sin(ix)\right) \wedge dy 
	= \left((1-\cos(ix))dx + dy\right) \wedge dy 
	= (1-\cos(ix))dx \wedge dy.$$
	
	We compute the action of the pushforward current: $
	(\psi_i)_\ast T(\omega) = \int_{\mathbb T^2} \psi_i^\ast\omega = \int_{\mathbb T^2} (1-\cos(ix)) dx\wedge dy.
	$
	Assuming $i$ is an integer, this integral evaluates to
	$$
	\int_{0}^{2\pi}\int_{0}^{2\pi} (1-\cos(ix)) \,dx\,dy = \int_{0}^{2\pi} \left[ x - \frac{\sin(ix)}{i} \right]_0^{2\pi} \,dy = \int_{0}^{2\pi} 2\pi \,dy = 4\pi^2 = \text{Area}(\mathbb T^2).
	$$
	For the limit map, $\psi^*\omega = d(x+y)\wedge dy = dx \wedge dy$, so $
	\psi_\ast T(\omega) = \int_{\mathbb T^2} dx\wedge dy = \text{Area}(\mathbb T^2).
	$
	Thus, we have explicitly shown that $\lim_{i\to\infty} (\psi_i)_\ast T(\omega) = \psi_\ast T(\omega)$, as predicted by the weak-* convergence. This example demonstrates how the integral of a pulled-back form remains constant even when the form itself, $\psi_i^*\omega$, is not constant and only converges to $\psi^*\omega$ in a weak sense (not uniformly).
	\subsubsection{Illustrative Example (A)}
	
	We consider the torus $\mathbb{T}^2$ as $[0, 2\pi] \times [0, 2\pi]$ with opposite sides identified, using coordinates $(\theta, \phi)$. Let $\omega = a(\theta, \phi)d\theta + b(\theta, \phi)d\phi$ be a smooth 1-form, with $a$ and $b$ being smooth, $2\pi$-periodic functions. Let $\alpha > 0$ be a small number such that $e^\alpha \leq 2\pi$ and $\alpha^2 \leq 2\pi$ (e.g., $\alpha=1$). Let the curve $\gamma$ be parameterized by $c(t) = (e^t, t^2)$ for $t \in [0, \alpha]$. The 1-current $T_\gamma$ is: $$
	T_\gamma(\omega) = \int_\gamma \omega = \int_0^\alpha \omega_{c(t)}(c'(t)) dt = \int_0^\alpha [a(e^t, t^2)e^t + b(e^t, t^2)2t] dt,$$ 
	since $c'(t) = (e^t, 2t)$. We use the diffeomorphisms: 
	$\psi(\theta, \phi) = (\theta + \phi, \phi)$ and  
	$\psi_i(\theta, \phi) = (\theta + (1 - \frac{1}{i})\phi, \phi)$, with $\psi_i \xrightarrow{C^0} \psi$.
	The pullbacks are:
	\[
	\psi_i^*\omega = a(\theta + (1 - \frac{1}{i})\phi, \phi) d\theta + \left[ (1 - \frac{1}{i})a(\theta + (1 - \frac{1}{i})\phi, \phi) + b(\theta + (1 - \frac{1}{i})\phi, \phi) \right] d\phi,
	\]
	and $
	\psi^*\omega = a(\theta + \phi, \phi) d\theta + \left[ a(\theta + \phi, \phi) + b(\theta + \phi, \phi) \right] d\phi.$ The pushforward of $T_\gamma$ by $\psi_i$ is $(\psi_{i*}T_\gamma)(\omega) = \int_\gamma \psi_i^*\omega = \int_0^\alpha \psi_i^*\omega_{c(t)}(c'(t)) dt$.
	Explicitly,
	\begin{align*}
		(\psi_{i*}T_\gamma)(\omega) = \int_0^\alpha \left[ a(\dots)e^t + \left( (1 - \tfrac{1}{i})a(\dots) + b(\dots) \right) 2t \right] dt,
	\end{align*}
	where the arguments of $a$ and $b$ are $(e^t + (1 - \frac{1}{i})t^2, t^2)$. Similarly,
	\begin{align*}
		(\psi_*T_\gamma)(\omega) = \int_0^\alpha \left[ a(e^t + t^2, t^2)e^t + \left( a(e^t + t^2, t^2) + b(e^t + t^2, t^2) \right) 2t \right] dt.
	\end{align*}
	
	The integrand for $(\psi_{i*}T_\gamma)(\omega)$ converges uniformly to the integrand for $(\psi_{*}T_\gamma)(\omega)$ on the compact interval $[0, \alpha]$, because $a$ and $b$ are continuous. Therefore, we can interchange the limit and the integral:
	$
	\lim_{i \to \infty} (\psi_{i*}T_\gamma)(\omega) = (\psi_*T_\gamma)(\omega).$ This demonstrates the stability.  $\quad \bigtriangleup
	$

	\begin{theorem}\label{Key-2-c} ($C^0$-rigidity of Cosymplectomorphisms)
		Let $(M, \omega, \eta)$ be a compact connected cosymplectic manifold. Then
		$ \overline{\Cosymp(M, \omega, \eta)}^{C^0}\cap \Diff(M) = \Cosymp(M, \omega, \eta).$ 
	\end{theorem}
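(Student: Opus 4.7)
The plan is to adapt the argument of Theorem \ref{Key-1-c} for symplectomorphisms, now tracking both structural tensors that define a cosymplectic structure. Recall that a diffeomorphism $\phi$ belongs to $\Cosymp(M,\omega,\eta)$ precisely when $\phi^*\omega = \omega$ and $\phi^*\eta = \eta$. The inclusion $\Cosymp(M,\omega,\eta) \subseteq \overline{\Cosymp(M,\omega,\eta)}^{C^0} \cap \Diff(M)$ is immediate, so only the reverse inclusion requires work, and the idea is to apply Lemma \ref{Key-current}(1) once to each of the defining tensors.

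Starting from an arbitrary $\psi \in \overline{\Cosymp(M,\omega,\eta)}^{C^0} \cap \Diff(M)$, I would extract a sequence $\{\psi_i\} \subseteq \Cosymp(M,\omega,\eta)$ with $\psi_i \xrightarrow{C^0} \psi$. The crucial simplification is that, since each $\psi_i$ is a cosymplectomorphism, the sequences of pullbacks $\{\psi_i^*\omega\}$ and $\{\psi_i^*\eta\}$ are \emph{constant}: $\psi_i^*\omega \equiv \omega$ and $\psi_i^*\eta \equiv \eta$. In particular, both sequences trivially converge uniformly, to $\omega$ and $\eta$ respectively.

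Applying Lemma \ref{Key-current}(1) to the 2-form $\omega$ then identifies the uniform limit with $\psi^*\omega$, forcing $\psi^*\omega = \omega$. Applying the same lemma to the closed 1-form $\eta$ gives $\psi^*\eta = \eta$ by the identical reasoning. Hence $\psi$ preserves both structural tensors, so $\psi \in \Cosymp(M,\omega,\eta)$, completing the reverse inclusion.

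There is essentially no obstacle beyond bookkeeping: the two conditions defining a cosymplectomorphism decouple, and each is handled by one invocation of the preliminary lemma. If one prefers a current-theoretic presentation mirroring Theorem \ref{Key-1-c} more closely, the same conclusion can be reached by evaluating arbitrary rectifiable $2$- and $1$-currents on $\omega - \psi^*\omega$ and $\eta - \psi^*\eta$ via Lemma \ref{Key-current}(2), then invoking the ``small-disc current'' argument to rule out any nonzero pointwise value; this is slightly longer but conceptually the same.
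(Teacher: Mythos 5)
Your proposal is correct and follows essentially the same route as the paper: both apply Lemma \ref{Key-current}(1) to the constant pullback sequences $\psi_i^*\omega \equiv \omega$ and $\psi_i^*\eta \equiv \eta$ to identify the (trivial) uniform limits with $\psi^*\omega$ and $\psi^*\eta$. Your write-up is in fact slightly more explicit than the paper's one-line invocation of the lemma, but the underlying argument is identical.
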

	\begin{proof}
		Let $\psi \in \overline{\Cosymp(M, \omega, \eta)}^{C^0} \cap \Diff(M)$. Let $\{\psi_i\} \subseteq \Cosymp(M, \omega, \eta)$ be a sequence with $\psi_i \xrightarrow{C^0} \psi$. Then $\pullback{\psi_i}\omega = \omega$ and $\pullback{\psi_i}\eta = \eta$. An application of the argument in Lemma \ref{Key-current}  implies that $\pullback{\psi}\omega = \omega$ and $\pullback{\psi}\eta = \eta$, so $\psi \in \Cosymp(M, \omega, \eta)$.
	\end{proof}
	\begin{remark} See \cite{Tchuiaga22} for a related result on the identity component of $\Cosymp(M, \omega, \eta)$. \end{remark}

	\begin{theorem}\label{Key-3-c} ($C^0$-rigidity of Contactomorphisms Preserving the Form)
		Let $(M, \alpha)$ be a compact connected contact manifold. Let $\Diff(M, \alpha) = \{\phi \in \Diff(M) : \pullback{\phi}\alpha = \alpha\}$. Then
		$$ \overline{ \Diff(M, \alpha)}^{C^0}\cap \Diff(M) = \Diff(M, \alpha).$$ 
	\end{theorem}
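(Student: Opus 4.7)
The plan is to mirror the proofs of Theorems \ref{Key-1-c} and \ref{Key-2-c} almost verbatim, exploiting the fact that each $\psi_i \in \Diff(M,\alpha)$ preserves the contact 1-form $\alpha$ on the nose. The reverse inclusion $\Diff(M,\alpha)\subseteq \overline{\Diff(M,\alpha)}^{C^0}\cap \Diff(M)$ is immediate from the constant sequence, so the only content lies in the other direction.

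Concretely, I would start by taking $\psi \in \overline{\Diff(M,\alpha)}^{C^0}\cap \Diff(M)$ and choosing a sequence $\{\psi_i\}\subseteq \Diff(M,\alpha)$ with $\psi_i \xrightarrow{C^0}\psi$. By the defining condition of $\Diff(M,\alpha)$, one has $\pullback{\psi_i}\alpha = \alpha$ for every $i$. Thus the sequence of 1-forms $\{\pullback{\psi_i}\alpha\}\subset \Omk{1}$ is the constant sequence $\{\alpha\}$, which converges uniformly (indeed, identically) to $\alpha$. At this point I would invoke Lemma \ref{Key-current}(1) applied to the smooth 1-form $\alpha$: since $\psi_i \xrightarrow{C^0}\psi$ with $\psi\in \Diff(M)$, and $\pullback{\psi_i}\alpha \to \alpha$ uniformly, the limit must coincide with $\pullback{\psi}\alpha$. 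Hence $\pullback{\psi}\alpha=\alpha$, placing $\psi$ in $\Diff(M,\alpha)$.

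Given Lemma \ref{Key-current}(1), there is no real obstacle; the argument is essentially a one-line deduction, in complete analogy with the symplectic and cosymplectic cases. The only point worth flagging explicitly in the write-up is that this theorem concerns the strict stabilizer of $\alpha$ itself, not the full contactomorphism group, whose elements merely preserve the contact distribution and satisfy $\pullback{\phi}\alpha = f\alpha$ for some positive function $f$. In that more general setting the constant-sequence trick fails: one would instead need to extract a $C^0$-limit for the conformal factors $f_i$ (or equivalently track the Reeb vector fields), and this genuinely requires the stronger convergence machinery developed in Theorem \ref{TC} rather than the elementary Lemma \ref{Key-current}(1).
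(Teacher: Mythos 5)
Your proposal is correct and follows essentially the same route as the paper: pick a $C^0$-convergent sequence in $\Diff(M,\alpha)$, observe that the pullbacks form the constant sequence $\alpha$ (hence converge uniformly), and apply Lemma \ref{Key-current}(1) to identify the limit with $\pullback{\psi}\alpha$. Your closing remark distinguishing the strict stabilizer of $\alpha$ from the full conformal contactomorphism group is a sensible caveat but does not change the argument.
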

	
	\begin{proof}
		Let $\psi \in \overline{ \Diff(M, \alpha)}^{C^0}\cap \Diff(M)$. There exists a sequence $\{\psi_i\} \subseteq \Diff(M, \alpha)$ such that $\psi_i \xrightarrow{C^0} \psi$. Since $\pullback{\psi_i}\alpha = \alpha$ for all $i$, the sequence of pullbacks $\{\pullback{\psi_i}\alpha\}$ is the constant sequence $(\alpha, \alpha, \dots)$. This sequence converges uniformly to the limit $\theta = \alpha$. By Lemma \ref{Key-current}(1), this limit must be equal to $\pullback{\psi}\alpha$. Therefore, $\pullback{\psi}\alpha = \alpha$, which means $\psi \in \Diff(M, \alpha)$.
	\end{proof}

	\textbf{Illustrative Example (B):} Consider $S^3$ as the unit sphere in $\mathbb{C}^2$. Let $(z_1, z_2)$ be complex coordinates, where $z_j = x_j + iy_j$. The standard contact form on $S^3$ is:
	$$
	\alpha = \frac{i}{2} \sum_{j=1}^2 (z_j d\bar{z}_j - \bar{z}_j dz_j) = x_1 dy_1 - y_1 dx_1 + x_2 dy_2 - y_2 dx_2.
	$$
	Consider a sequence of angles $\theta_i = 1/i \to 0$. Define $\phi_i \in \Diff(S^3)$ as a rotation in the first complex coordinate:
	$
	\phi_i(z_1, z_2) = (e^{i\theta_i} z_1, z_2).
	$
	Each $\phi_i$ preserves the contact form $\alpha$ and is therefore in $\Diff(S^3, \alpha)$. As $i \to \infty$, $\theta_i \to 0$, so $\phi_i$ converges uniformly to the identity map $\psi = \Id$:
	$$
	d_{C^0}(\phi_i, \Id) = \sup_{(z_1,z_2) \in S^3} \|(e^{i\theta_i} z_1, z_2) - (z_1, z_2)\| = \sup |(e^{i\theta_i}-1)z_1| = |e^{i\theta_i}-1| \to 0.
	$$
	The limit map $\psi = \Id$ clearly preserves the contact form, as $\psi^*\alpha = \Id^*\alpha = \alpha$. This confirms the result of Theorem \ref{Key-3-c}: the $C^0$-limit of a sequence of contactomorphisms is itself a contactomorphism. \quad $\bigtriangleup$
	
	\begin{theorem}\label{Key-4-c} ($C^0$-rigidity of Volume-preserving Diffeomorphisms)
		Let $(M, \Omega)$ be a compact connected oriented manifold with volume form $\Omega$. Then
		$ \overline{ \Diff(M, \Omega)}^{C^0}\cap \Diff(M) = \Diff(M, \Omega).$ 
	\end{theorem}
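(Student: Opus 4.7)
The plan is to adapt the argument of Theorem \ref{Key-3-c} essentially verbatim, replacing the contact form $\alpha$ with the volume form $\Omega$. The volume form is a smooth top-degree form, so Lemma \ref{Key-current}(1) applies without any additional machinery, and no appeal to the flat-norm convergence of Theorem \ref{TC} is needed.

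First, I would take an arbitrary $\psi \in \overline{\Diff(M,\Omega)}^{C^0} \cap \Diff(M)$ and extract a sequence $\{\psi_i\} \subseteq \Diff(M,\Omega)$ with $\psi_i \xrightarrow{C^0} \psi$. Since each $\psi_i$ is volume preserving, by definition $\pullback{\psi_i}\Omega = \Omega$ for every $i$. Hence the sequence of pulled-back $n$-forms $\{\pullback{\psi_i}\Omega\}$ is the constant sequence with value $\Omega$, which trivially converges uniformly (in the $C^0$-norm on $\Omk{n}$) to the form $\Omega$ itself.

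Next, I would apply Lemma \ref{Key-current}(1) with $\alpha := \Omega \in \Omk{n}$ and $\beta := \Omega$. The lemma asserts that whenever $\pullback{\psi_i}\alpha$ converges uniformly to some limit $\beta$, that limit must coincide with $\pullback{\psi}\alpha$. Consequently, $\pullback{\psi}\Omega = \Omega$, which is exactly the statement that $\psi \in \Diff(M,\Omega)$. This proves the nontrivial inclusion $\overline{\Diff(M,\Omega)}^{C^0} \cap \Diff(M) \subseteq \Diff(M,\Omega)$; the reverse inclusion is immediate since $\Diff(M,\Omega)$ is trivially contained in its own $C^0$-closure.

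There is essentially no serious obstacle in this argument: the hard analytic work has already been absorbed into Lemma \ref{Key-current}(1). The only subtlety worth flagging is that the hypothesis of that lemma requires $\psi$ itself to be a diffeomorphism (which is guaranteed by our intersection with $\Diff(M)$) and requires uniform, not merely pointwise or weak-$\ast$, convergence of the pullback sequence — both conditions are satisfied for free here because the pullback sequence is literally constant. Thus the proof is a one-line application of Lemma \ref{Key-current}(1) to the volume form, mirroring the contactomorphism case of Theorem \ref{Key-3-c}.
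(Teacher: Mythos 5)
Your proposal is correct and is essentially identical to the paper's own proof: both apply Lemma \ref{Key-current}(1) to the constant sequence $\pullback{\psi_i}\Omega = \Omega$ to conclude $\pullback{\psi}\Omega = \Omega$. No further comment is needed.
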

	
	\begin{proof}
		Let $\psi \in \overline{ \Diff(M, \Omega)}^{C^0}\cap \Diff(M)$. There exists a sequence of volume-preserving diffeomorphisms $\{\psi_i\} \subseteq \Diff(M, \Omega)$ such that $\psi_i \xrightarrow{C^0} \psi$. Since $\pullback{\psi_i}\Omega = \Omega$ for all $i$, the sequence of pullbacks converges uniformly to $\Omega$. By Lemma \ref{Key-current}(1), the limit must also be $\pullback{\psi}\Omega$. Therefore, $\pullback{\psi}\Omega = \Omega$.
	\end{proof}

	\begin{theorem}\label{Isom} ($C^0$-rigidity of Isometries)
		Let $(M, g)$ be a smooth, compact, connected Riemannian manifold. Then the group of smooth isometries $\Iso(M,g)$ is $C^0$-closed within the group of smooth diffeomorphisms $\Diff(M)$.
	\end{theorem}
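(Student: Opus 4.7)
The plan is to exploit the classical fact that smooth isometries preserve the Riemannian distance function, pass to the uniform limit, and then recover infinitesimal metric preservation. This strategy departs from the pattern used in Theorems \ref{Key-1-c}--\ref{Key-4-c} (which apply Lemma \ref{Key-current} to a constant sequence of forms) because the Riemannian metric is a symmetric $(0,2)$-tensor rather than a differential form, so the current-theoretic machinery does not directly certify $\psi^* g = g$ from the constancy of the sequence $\{\psi_i^* g\}$.

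First, for $\{\psi_i\} \subset \Iso(M,g)$ with $\psi_i \xrightarrow{C^0} \psi \in \Diff(M)$, I will record that each $\psi_i$ preserves Riemannian distance: $d_g(\psi_i(x), \psi_i(y)) = d_g(x, y)$ for all $x, y \in M$. Continuity of $d_g$ on $M \times M$ together with the pointwise convergence $\psi_i(x) \to \psi(x)$ (which follows from uniform convergence) then yield, upon passing to the limit on both sides, $d_g(\psi(x), \psi(y)) = d_g(x, y)$. Hence the smooth diffeomorphism $\psi$ preserves the Riemannian distance.

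The second step upgrades this to $\psi^* g = g$. Fix $x \in M$ and $v \in T_x M$, and choose a $C^1$ curve $\gamma$ with $\gamma(0) = x$ and $\gamma'(0) = v$. The standard expansion $d_g(x, \gamma(t)) = t\|v\|_g + o(t)$ as $t \to 0^+$ (a consequence of working in geodesic normal coordinates at $x$), applied both to $\gamma$ and to the $C^1$ curve $\psi \circ \gamma$ whose initial velocity is $(d\psi)_x v$, will give
\[ \|(d\psi)_x v\|_g = \lim_{t \to 0^+} \frac{d_g(\psi(x), \psi(\gamma(t)))}{t} = \lim_{t \to 0^+} \frac{d_g(x, \gamma(t))}{t} = \|v\|_g. \]
The polarization identity then forces $d\psi_x$ to be a linear isometry of $(T_x M, g_x)$, equivalently $(\psi^* g)_x = g_x$. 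Since $x$ is arbitrary, $\psi^* g = g$ and $\psi \in \Iso(M, g)$.

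The one delicate ingredient is the asymptotic expansion $d_g(x, \gamma(t)) = t\|v\|_g + o(t)$ together with its image under $\psi$; the left-hand equality in the display above relies on applying this expansion to $\psi \circ \gamma$, for which smoothness of both $\psi$ and $\gamma$ is essential. Once this is in place the argument is purely algebraic, and no appeal to the full Myers--Steenrod theorem is required because $\psi \in \Diff(M)$ is granted by hypothesis.
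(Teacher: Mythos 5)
Your proof is correct, but it follows a genuinely different route from the paper. You argue at the level of the distance function: each $\psi_i$ preserves $d_g$, distance preservation survives the $C^0$ limit by continuity of $d_g$, and then the first-order expansion $d_g(x,\gamma(t)) = t\norm{v}_g + o(t)$ applied to $\gamma$ and to $\psi\circ\gamma$ (legitimate since $\psi$ is smooth by hypothesis) together with polarization yields $\psi^* g = g$; this is essentially the easy, smooth-case fragment of Myers--Steenrod, and it is complete as written. The paper instead stays inside its unified rigidity framework: it lifts each isometry to the unit cotangent bundle $S^*M$, where it preserves the canonical contact form, and invokes Theorem \ref{Key-3-c} on $S^*M$ to conclude that the lifted limit preserves that form, hence the base map is an isometry. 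The paper's route has the virtue of exhibiting isometry rigidity as a corollary of contact rigidity, but it quietly relies on the claim that $C^0$-convergence $\psi_i \to \psi$ on $M$ implies $C^0$-convergence of the cotangent lifts $\Psi_i \to \Psi$ on $S^*M$ --- a nontrivial point, since the lift involves $d\psi_i$ (for isometries one can justify it, e.g.\ via uniform bounds on the differentials, but the paper does not). Your argument sidesteps that issue entirely, needs only $\psi \in C^1$, and is more elementary and self-contained; what it does not do is tie the result back to the current-theoretic machinery that the paper is advertising.
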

	
	\begin{proof}
		Let $\psi \in \overline{\Iso(M,g)}^{C^0} \cap \Diff(M)$. There is a sequence of isometries $\{\psi_i\}$ such that $\psi_i \xrightarrow{C^0} \psi$. We will use the characterization of isometries via contact geometry on the cotangent bundle.
		A diffeomorphism $\phi: M \to M$ is an isometry if and only if its canonical lift to the unit cotangent bundle, $\Phi: S^*M \to S^*M$, preserves the canonical contact 1-form $\alpha$ on $S^*M$. For each isometry $\psi_i$, its lift $\Psi_i: S^*M \to S^*M$ is a contactomorphism, i.e., $\Psi_i^*\alpha = \alpha$. The $C^0$-convergence $\psi_i \to \psi$ on $M$ implies the $C^0$-convergence of their lifts, $\Psi_i \to \Psi$, on $S^*M$. We now have a sequence of contactomorphisms $\{\Psi_i\}$ converging in $C^0$ to the diffeomorphism $\Psi$.
		By the $C^0$-rigidity of contactomorphisms (Theorem \ref{Key-3-c} applied to the manifold $S^*M$ with form $\alpha$), the limit map $\Psi$ must also be a contactomorphism, meaning $\Psi^*\alpha = \alpha$.
		This implies that its base map $\psi: M \to M$ must be an isometry. Therefore, $\psi \in \Iso(M,g)$.
	\end{proof}
	\section{Illustrations and Further Discussions}
	
	\subsection{Convergence of integrals over submanifolds}
	Assume we have a compact, orientable $k$-submanifold $N \subset M$ and a sequence of diffeomorphisms $\psi_i \xrightarrow{C^0} \psi \in \Diff(M)$ so that for any closed $1$-form $\beta$ the sequence $\pullback{\psi_i} \beta$ converges uniformly in $\mathcal Z^1(M)$. Let $\omega$ be a smooth $k$-form on $M$. Let $[N]$ be the rectifiable current corresponding to integration over $N$. By Theorem \ref{TC}, $(\psi_i)_*[N] \to \psi_*[N]$ in the flat norm, which implies weak-* convergence. Thus, $(\psi_i)_*[N](\omega) \to \psi_*[N](\omega)$. By the definition of the pushforward:
	\[ (\psi_i)_*[N](\omega) = [N](\psi_i^* \omega) = \int_N \psi_i^* \omega = \int_{\psi_i(N)} \omega, \]
	and similarly $\psi_*[N](\omega) = \int_{\psi(N)} \omega$. Therefore, we conclude that
	\[ \lim_{i \to \infty} \int_{\psi_i(N)} \omega = \int_{\psi(N)} \omega. \]
	This shows that not just volumes, but integrals of any smooth $k$-form over the transformed submanifolds, converge. This reflects a preservation of geometric quantities under $C^0$-limits.
	
	\subsection{Homology and linking numbers}
	Let $N_1$ (dimension $k$) and $N_2$ (dimension $n-k-1$) be disjoint, compact, oriented submanifolds of a compact, oriented manifold $M$ (dimension $n$). They represent homology classes and have a well-defined linking number, $\Lk(N_1, N_2)$. Consider a sequence $\psi_i \xrightarrow{C^0} \psi$ so that   for any closed $1$-form $\beta$ the sequence $\pullback{\psi_i} \beta$ converges uniformly in $\mathcal Z^1(M)$. For $i$ large enough, $\psi_i$ is homotopic to $\psi$, so $\psi_i(N_1)$ and $\psi(N_1)$ are homologous, and similarly for $N_2$. This implies that the homology classes are eventually constant. Linking numbers, being homological invariants, are also constant for large $i$: $\Lk(\psi_i(N_1), \psi_i(N_2)) = \Lk(\psi(N_1), \psi(N_2))$. Our theorem provides a geometric underpinning for this topological stability via the convergence of the underlying geometric objects (currents) in the flat norm.
	
	\subsection{Convergence of Minimal Surfaces}
	Let $N$ be a minimal surface (a critical point of the area functional) in $(M,g)$. Consider $\psi_i \xrightarrow{C^0} \psi$ so that   for any closed $1$-form $\beta$ the sequence $\pullback{\psi_i} \beta$ converges uniformly in $\mathcal Z^1(M)$. While the areas of $\psi_i(N)$ converge to the area of $\psi(N)$, it is not immediately obvious that $\psi(N)$ is also minimal. Flat norm convergence controls the "shape" of the surfaces, suggesting that properties related to first variations of area might be preserved. A full proof that $\psi(N)$ is minimal would require additional geometric analysis tools, but flat convergence is a key first step in many such stability arguments.
	
	\subsection{Currents defined by vector fields and flows}
	Let $X$ be a smooth vector field on $M$. We can associate a $1$-current $T_X$ to $X$ by $T_X(\omega) = \int_M \omega(X) \, d\text{vol}_g$ for any 1-form $\omega$. Let $\{\psi_i\}$ be a sequence of diffeomorphisms converging to the identity in $C^0$ so that for a smooth $p$-form $\alpha \in \Omk{p}$ the sequence $\pullback{\phi_k} \alpha$ converges uniformly in $\Omk{p}$. Using Lemma \ref{Key-current}, we have weak-* convergence $(\psi_i)_* T_X \to T_X$. The pushforward action is
	$(\psi_i)_* T_X(\omega) = T_X(\psi_i^*\omega) = \int_M (\psi_i^*\omega)(X) \, d\text{vol}_g.$
	A direct calculation shows this corresponds to a current generated by the pushforward vector field $(\psi_i)_*X$. The weak-* convergence implies that integrals involving these transformed vector fields converge, even if the vector fields $(\psi_i)_*X$ themselves do not converge uniformly.

	\begin{center}
		\section*{Acknowledgments}
		This work is dedicated to unknown Mathematicians. 
		
	\end{center}

	

\begin{thebibliography}{99}
		\bibitem{Ambrosio08}
		Luigi Ambrosio, Nicola Gigli, and Giuseppe Savaré,
		\emph{Gradient flows: in metric spaces and in the space of probability measures},
		Birkhäuser, 2008.
		
		\bibitem{Arn}
		Vladimir I. Arnold,
		\emph{Mathematical Methods of Classical Mechanics},
		Springer, 1989.
		
		\bibitem{Bogachev07}
		Vladimir I. Bogachev,
		\emph{Measure Theory}, Vol. 1 and 2,
		Springer, 2007.
		
		\bibitem{Broecker82}
		Th. Bröcker and K. Jänich,
		\emph{Introduction to Differential Topology},
		Cambridge University Press, 1982.
		
		\bibitem{Cartan06}
		Henri Cartan,
		\emph{Differential Forms},
		Dover Publications, 2006.
		
		\bibitem{Eliashberg81}
		Y. Eliashberg,
		\emph{Rigidity of symplectic and contact structures},
		Preprint, 1981.
		
		\bibitem{Eliashberg87}
		Y. Eliashberg,
		\emph{A theorem on the structure of wave fronts and its application in symplectic topology},
		Funct. Anal. and Its Applications, \textbf{21} (1987), 227--232.
		
		\bibitem{Evans-Gariepy15}
		Lawrence C. Evans and Ronald F. Gariepy,
		\emph{Measure theory and fine properties of functions}, revised edition,
		CRC Press, 2015.
		
		\bibitem{Federer69}
		Herbert Federer,
		\emph{Geometric Measure Theory},
		Springer, 1969.
		\bibitem{Gangbo96}
		Wilfred Gangbo and Robert J. McCann,
		\emph{The geometry of optimal transportation},
		Acta Mathematica, \textbf{177} (1996), 113--161.
		\bibitem{Gangbo99}
		Wilfred Gangbo and Robert J. McCann,
		\emph{Optimal maps in Monge's mass transport problem},
		Comptes Rendus de l'Académie des Sciences - Series I - Mathematics, \textbf{329} (1999), 907--912.
		\bibitem{Gangbo98}
		Wilfred Gangbo,
		\emph{An elementary proof of the differentiability of optimal maps in the multi-dimensional Monge-Kantorovich problem},
		Advances in Mathematics, \textbf{137} (1998), 232--241.
		
		\bibitem{Hirsch76}
		Morris W. Hirsch,
		\emph{Differential Topology},
		Springer, 1976.
		
		\bibitem{Lee13}
		John M. Lee,
		\emph{Introduction to Smooth Manifolds},
		Springer, 2013.
		
		
		\bibitem{Tchuiaga18}
		Stéphane Tchuiaga,
		\emph{On symplectic dynamics},
		Differential Geometry and its Applications, \textbf{59} (2018), 170--196.
		
		\bibitem{Tchuiaga22}
		S. Tchuiaga, F. Houenou, and P. Bikorimana,
		\emph{On Cosymplectic Dynamics I},
		Complex Manifolds, \textbf{9} (2022), 114--137.
		\bibitem{TKS25}
		S. Tchuiaga, {\em A survey on volume-preserving rigidity}, to appear (2025).
		
		\bibitem{Whitney57}
		Hassler Whitney,
		\emph{Geometric Integration Theory},
		Princeton University Press, 1957.
		
		\bibitem{EvansPDE} Lawrence C. Evans, \textit{Partial Differential Equations}, 2nd Edition, American Mathematical Society, 2010.
		
		
		\bibitem{Mattila99}
		Pertti Mattila,
		\emph{Geometry of sets and measures in Euclidean spaces},
		Cambridge University Press, 1999.
		\bibitem{SimonGMT} Leon Simon, \textit{Lectures on Geometric Measure Theory}, Centre for Mathematical Analysis, Australian National University, 1983.
		\bibitem{Santambrogio15}
		Filippo Santambrogio,
		\emph{Optimal transport for applied mathematicians},
		Birkhäuser/Springer, 2015.
		
	
	
		
	\end{thebibliography}
\end{document}